\documentclass[11pt]{article}

\usepackage{epsfig}
\usepackage{amssymb}
\usepackage[]{times}
\oddsidemargin=0in
\evensidemargin=0in
\textwidth=6.25in
\headsep=0pt
\topmargin=0in
\textheight=9.1in
\newcommand{\be}{\begin{equation}}
\newcommand{\ee}{\end{equation}}
\newcommand{\bea}{\begin{eqnarray}}
\newcommand{\eea}{\end{eqnarray}}
\newtheorem{theorem}{Theorem}

\newtheorem{corollary}{Corollary}

\newtheorem{conjecture}{Conjecture}
\newtheorem{example}{Example}
\newenvironment{proof}[1][Proof]{\begin{trivlist}
\item[\hskip\labelsep {\bfseries #1}]}{\end{trivlist}}
\def\1#1{^{(#1)}}
\def\la{\langle}
\def\ra{\rangle}
\begin{document}
\title{Genera of numerical semigroups and\\
polynomial identities for degrees of syzygies}
\author{Leonid G. Fel\\ \\
Department of Civil Engineering, Technion, Haifa 32000, Israel\\
{\em e-mail: lfel@technion.ac.il}} 
\date{}
\maketitle
\def\be{\begin{equation}}
\def\ee{\end{equation}}
\def\bea{\begin{eqnarray}}
\def\eea{\end{eqnarray}}
\def\p{\prime}
\vspace{-.75cm}
\begin{abstract}
We derive polynomial identities of arbitrary degree $n$ for syzygies degrees of 
numerical semigroups $S_m\!=\!\la d_1,\ldots,d_m\ra$ and show that for $n\ge m$ 
they contain higher genera $G_r\!=\!\sum_{s\in{\mathbb Z}_>\!\!\setminus S_m}s^
r$ of $S_m$. We find a number $g_m\!=\!B_m-m+1$ of algebraically independent 
genera $G_r$ and equations, related any of $g_m+1$ genera, where $B_m\!=\!
\sum_{k=1}^{m-1}\beta_k$ and $\beta_k$ denote the total and  partial Betti 
numbers of non-symmetric semigroups. The number $g_m$ is strongly dependent on 
symmetry of $S_m$ and decreases for symmetric semigroups and complete 
intersections.\\
{\bf Keywords:} numerical semigroups, degrees of syzygies, the Frobenius number
and genus\\
{\bf 2010 Mathematics Subject Classification:} Primary -- 20M14, Secondary --
11P81.
\end{abstract}
\section{Introduction}\label{l1} 
Two sets of polynomial and quasi-polynomial identities for degrees of syzygies in
numerical semigroups $S_m\!=\!\la d_1,\ldots,d_m\ra$ were derived recently 
\cite{fl17} when studying the rational representation (Rep) of the Hilbert 
series of $S_m$ and the quasi-polynomial Rep of the restricted partition 
function. A part of polynomial identities of degrees $1\le n\le m-2$ were 
coincided with Herzog-K\"uhl's equations \cite{Her84} for the Betti numbers of
graded Cohen-Macaulay modules of codimension $m\!-\!1$, but a new additional 
polynomial identity of degree $n\!=\!m-1$ turned out to be an important tool in 
a study of symmetric (not complete intersection) semigroups with small embedding
dimension ({\em edim}) $m=4,5,6$ \cite{fl15,f18,fa18}. 

A further application \cite{fl18} of polynomial identities for higher degrees of
syzygies, $n\ge m$, involves power sums $G_{n-m}\!=\!\sum_{s\in\Delta_m}s^{n-m}
$, which called {\em genera} of numerical semigroups, where $\Delta_m\!=\!
{\mathbb Z}_>\!\!\setminus S_m$ and $G_0=\#\Delta_m$ denote a set of semigroup 
gaps and its cardinality ({\em genus}), respectively. A set $\Delta_m$ is 
uniquely defined by semigroup generators $d_j$. Albeit there are $\mu-1$ 
explicitly known gaps $1\le s\le\mu-1$, where $\mu=\min\{d_1,\ldots,d_m\}$ 
denotes a semigroup multiplicity, a most of gaps $s>\mu$ (including the largest 
gap $F_m$ which called the Frobenius number) cannot be determined explicitly.

By a fundamental theorem of symmetric polynomials \cite{mc95}, there exists a 
finite number $g_m$ of algebraically independent genera $G_r$. On the other 
hand, an involvement of $G_r$ into polynomial identities for syzygies degrees 
may decrease this number. In the present paper, we study this question for 
arbitrary semigroup $S_m$ and find how does the number $g_m$ depend on special 
characteristics of a semigroup (e.g., {\em edim}) and its properties 
(non-symmetric, symmetric, complete intersection). For this purpose, we derive 
polynomial identities of higher degrees $n\ge m$ and find algebraic equalities 
related a finite number $g_m+1$ of genera.

The paper is organized in six sections. In section \ref{l2} we obtain polynomial
identities (\ref{y9}) of higher degrees $n\ge m$ following an approach, 
suggested in \cite{Her84}. The rest of this section is completely technical: we 
determine necessary expressions for all entries appeared in formula (\ref{y9}). 
In section \ref{l3} we derive linear equations (\ref{y19}) for alternating power
sums ${\mathbb C}_k(S_m)$ and put forward a conjecture on the linear Rep of 
coefficients $K_p$ in (\ref{y22}) by genera $G_r$ of a semigroup and 
special polynomials $T_r$ defined in (\ref{y25}). In section \ref{l4} we prove 
the existence of a polynomial equation ${\mathcal R}_G(G_0,\ldots,G_{g_m})\!=\!
0$ for arbitrary non-symmetric semigroups, where $g_m\!=\!B_m-m+1$, and $B_m\!=
\!\sum_{k=1}^{m-1}\beta_k$ and $\beta_k$ denote the total and partial Betti 
numbers of $S_m$, and find such equation for $S_3$. In section \ref{l5} 
we discuss supplementary relations for 
$G_k$ in symmetric semigroups and complete intersection (CI), and give formulas 
for $g_m$ in both cases, e.g., in the latter case it looks much simple, $g_m=m
-2$. In section \ref{l6} we give concluding remarks about coefficients $K_p$.
\section{Polynomial identities for numerical semigroups}\label{l2}
Recall the basic facts on numerical semigroups and polynomial identities
following \cite{fl17}. Let a numerical semigroup $S_m$ be minimally generated by
a set of natural numbers $\{d_1,\ldots,d_m\}$, where $\mu\ge m$ and $\gcd(d_1,
\ldots,d_m)=1$. Its generating function $H\left(S_m;z\right)$,
\bea
H\left(S_m;z\right)=\sum_{s\;\in\;S_m}z^s,\qquad z<1,\qquad 0\in S_m,\label{y1}
\eea
is referred to as {\em the Hilbert series} of $S_m$ and has a rational Rep,
\bea
H\left(S_m;z\right)=\frac{Q\left(S_m;z\right)}{\prod_{i=1}^m\left(1-z^{d_i}
\right)},\qquad C_{k,j}\in{\mathbb Z}_>,\;\;1\le k\le m-1,\;\;1\le j\le \beta_k,
\label{y2}
\eea
\vspace{-.5cm}
\bea
Q\left(S_m;z\right)=1-\sum_{j=1}^{\beta_1}z^{C_{1,j}}+\sum_{j=1}^{\beta_2}      
z^{C_{2,j}}-\cdots\pm\sum_{j=1}^{\beta_{m-1}}z^{C_{m-1,j}},\qquad\sum_{k=0}^
{m-1}(-1)^k\beta_k=0,\quad\beta_0=1,\label{y3}
\eea
where $C_{k,j}$ and $\beta_k$ stand for degrees of syzygies and partial Betti's 
numbers, respectively. The largest degree $Q_m$ of the $(m-1)$-th syzygy is 
related to the Frobenius number $F_m$ of $S_m$,
\bea
Q_m\!=\!F_m+\sigma_1,\quad Q_m\!=\!\max{\sf PF}(S_m),\;\;{\sf PF}(S_m)\!=\!
\left\{C_{m-1,1},\ldots,C_{m-1,\beta_{m-1}}\right\},\quad\sigma_1\!=\!
\sum_{j=1}^md_j,\label{y4}
\eea
where ${\sf PF}(S_m)$ is called a set of pseudo-Frobenius numbers. Denote by 
${\mathbb C}_k(S_m)$ the alternating power sum of syzygies degrees,
\bea
{\mathbb C}_k(S_m)=\sum_{j=1}^{\beta_1}C_{1,j}^k-\sum_{j=1}^{\beta_2}C_{2,j}^k  
+\ldots-(-1)^{m-1}\sum_{j=1}^{\beta_{m-1}}C_{m-1,j}^k,\label{y5}
\eea
and write the polynomial identities (Theorem 1 in \cite{fl17}) for a semigroup
$S_m$,
\bea
{\mathbb C}_0(S_m)=1,\qquad{\mathbb C}_r(S_m)=0,\quad 1\le r\le m-2,\qquad
{\mathbb C}_{m-1}(S_m)=(-1)^m(m-1)!\pi_m,\label{y6}
\eea
where $\pi_m=\prod_{i=1}^md_i$.
\subsection{Polynomial identities of arbitrary degree}\label{l21}
Start with relation for the Hilbert series $H\left(S_m;z\right)$ and a
generating function $\Phi\left(S_m;z\right)$ for the semigroup gaps $s\in\Delta
_m$,
\bea
\Phi\left(S_m;z\right)+H\left(S_m;z\right)=\frac1{1-z},\qquad\Phi\left(S_m;z
\right)=\sum_{s\in\Delta_m}z^s,\label{y7}
\eea
and present the numerator $Q\left(S_m;z\right)$ in (\ref{y2}) as follows,
\bea
Q\left(S_m;z\right)=(1-z)^{m-1}\Pi\left(S_m;z\right),\qquad\Pi\left(S_m;z\right)
=\Psi\left(S_m;z\right)\left[1-(1-z)\Phi\left(S_m;z\right)\right],\label{y8}
\eea
where $\Psi\left(S_m;z\right)$ is a product of cyclotomic polynomials $\Psi_j
\left(S_m;z\right)$,
\bea
\Psi\left(S_m;z\right)=\prod_{j=1}^m\Psi_j(z),\qquad\Psi_j(z)=\sum_{k=0}^{d_j-1}
z^k,\quad\Psi_j(1)=d_j,\quad\Psi\left(S_m;1\right)=\pi_m.\nonumber
\eea
Differentiating $r$ times the first equality in (\ref{y8}), we obtain an 
infinite set of algebraic equations related syzygies degrees $C_{k,j}$ of a 
semigroup $S_m$ with its generators $d_j$ and gaps $s\in\Delta_m$,
\bea
Q^{(r)}_z(z)=\sum_{k=0}^r(-1)^k\frac{(m-1)!}{(m-k-1)!}{r\choose k}(1-z)^{m-k-1}
\Pi^{(r-k)}_z(z),\qquad r\ge 1,\label{y9}
\eea
where
\bea
Q^{(r)}_z(z)=\frac{d^rQ\left(S_m;z\right)}{dz^r},\qquad\Pi^{(r)}_z(z)=\frac{d^r
\Pi\left(S_m;z\right)}{dz^r}.\nonumber
\eea

Calculate separately derivatives $Q^{(r)}_z(z)$ and $\Pi^{(r-k)}_z(z)$. 
According to expression (\ref{y3}), we obtain, 
\bea
Q^{(r)}_z(z)\!=\!-\sum_{j=1}^{\beta_1}(C_{1,j})_rz^{C_{1,j}-r}\!+\sum_{j=1}^
{\beta_2}(C_{2,j})_rz^{C_{2,j}-r}\!-\ldots+(-1)^{m-1}\sum_{j=1}^{\beta_{m-1}}
(C_{m-1,j})_rz^{C_{m-1,j}-r},\;\label{y10}
\eea
where
\bea
(C_{i,j})_r=C_{i,j}(C_{i,j}-1)\times\ldots\times(C_{i,j}-r+1),\quad\mbox{if}\;\;
r\le C_{i,j}\quad\mbox{and}\quad(C_{i,j})_r=0,\quad\mbox{if}\;\;r>C_{i,j},
\nonumber
\eea
and $(x)_r=x(x-1)\times\ldots\times (x-r+1)$ denotes the falling factorial. 

Making use of alternating sums ${\mathbb C}_k(S_m)$ in (\ref{y5}), present the
polynomial expansion (\ref{y10}) as follows,
\bea
Q^{(r)}_z(1)=-\sum_{k=0}^r{\mathfrak S}^r_k{\mathbb C}_k(S_m),\qquad{\mathfrak 
S}^n_k=(-1)^{n-k}\left[n\atop k\right],\qquad{\mathfrak S}^n_n=1,\label{y11}
\eea
where ${\mathfrak S}^n_k$ denote Stirling's numbers of the 1st kind and symbols 
$\left[n\atop k\right]$ satisfy the recurrence relation,
\bea
\left[n+1\atop k\right]=n\left[n\atop k\right]+\left[n\atop k-1\right],\quad 1
\le k\le n,\qquad\left[n\atop n\right]=1.\label{y12}
\eea
In Appendix \ref{appendix1} we present the first expressions for $\left[n\atop 
n-k\right]$ up to $k=9$.

A straightforward calculation of the derivative $\Pi^{(r)}_z(z)$ gives
\bea
&&\Pi^{(r)}_z(z)=\Psi^{(r)}_z(z)+\sum_{k=1}^rk{r\choose k}\Psi^{(r-k)}_z(z)
\Phi^{(k-1)}_z(z)-(1-z)\sum_{k=0}^r{r\choose k}\Psi^{(r-k)}_z(z)\Phi^{(k)}_z(z),
\qquad\label{y13}
\eea 
where
\bea
\Psi^{(r)}_z(z)\!=\!\sum_{k_1,\ldots, k_m\ge 0}^{r=k_1+\ldots +k_m}\!\frac{r}
{k_1!\cdots k_m!}\prod_{j=1}^m\Psi_{j,z}^{(k_j)}(z),\quad\Psi^{(k)}_{j,z}(z)=
\!\sum_{l\ge k}^{d_j-1}(l)_kz^{l-k},\quad\Phi^{(k)}_z(z)=\!\sum_{s\in\Delta_m
\atop s\ge k}(s)_kz^{s-k},\nonumber
\eea
and
\bea
\Psi^{(r)}_z(z)=\frac{d^r\Psi\left(S_m;z\right)}{dz^r},\qquad
\Psi^{(r)}_{j,z}(z)=\frac{d^r\Psi_j\left(S_m;z\right)}{dz^r},\qquad
\Phi^{(r)}_z(z)=\frac{d^r\Phi\left(S_m;z\right)}{dz^r}.\label{y14}
\eea
Thus, we obtain an expression for the derivative $\Pi^{(r)}_z(z)$ at $z=1$, 
\bea
&&\Pi^{(r)}_{z=1}=\Psi^{(r)}_{z=1}+\sum_{k=1}^rk{r\choose k}\Psi^{(r-k)}_{z=1}
\Phi^{(k-1)}_{z=1}=\Psi^{(r)}_{z=1}+r\sum_{k=0}^{r-1}{r-1\choose k}\Psi^{(r-k-1)
}_{z=1}\Phi^{(k)}_{z=1},\label{y15}\\
&&\Pi^{(r)}_{z=1}=\Pi^{(r)}_z(z=1),\quad\Psi^{(r)}_{z=1}=\Psi^{(r)}_z(z=1),\;\;
\Psi^{(k)}_{j,z=1}=\Psi^{(k)}_{j,z}(z=1),\quad\Phi^{(k)}_{z=1}=\Phi^{(k)}_z(z=1)
.\quad\nonumber
\eea
\subsection{Derivatives $\;\Phi^{(r)}_{z=1}$, $\Psi^{(r)}_{z=1}$ and $\Pi^{(r)}_
{z=1}$}\label{l22}
Derivatives $\Phi^{(r)}_{z=1}$ may be calculated separately in accordance with 
(\ref{y14}),
\bea
&&\Phi^{(r)}_{z=1}=\sum_{k=0}^r{\mathfrak S}^r_kG_k,\qquad G_k=\sum_{s\in\Delta
_m}s^k,\qquad\mbox{e.g.}\label{y16}\\
&&\Phi^{(0)}_{z=1}=G_0,\qquad\Phi^{(1)}_{z=1}=G_1,\qquad\Phi^{(2)}_{z=1}=G_2-
G_1,\qquad\Phi^{(3)}_{z=1}=G_3-3G_2+2G_1,\nonumber\\
&&\Phi^{(4)}_{z=1}=G_4-6G_3+11G_2-6G_1,\qquad\Phi^{(5)}_{z=1}=G_5-10G_4+35G_3-
50G_2+24G_1.\nonumber
\eea
The sums $G_k$ are known as {\em genera} of numerical semigroup $S_m$ and $G_0$ 
denotes a semigroup {\em genus}.

General formulas for derivatives $\Psi^{(r)}_{z=1}$ and $\Pi^{(r)}_{z=1}$ are 
given in (\ref{y13}) and (\ref{y15}) and cannot be simplified essentially for 
arbitrary $r$. Here we present the expressions for $\Psi^{(r)}_{z=1}$ and 
$\Pi^{(r)}_{z=1}$ for small $r\le 4$. All necessary calculations are given in 
Appendix \ref{appendix2}.
\bea
\frac{\Psi^{(0)}_{z=1}}{\pi_m}&\!\!=\!\!&1,\quad\frac{\Psi^{(1)}_{z=1}}{\pi_m}
=\frac{\sigma_1-m}{2},\quad\frac{\Psi^{(2)}_{z=1}}{\pi_m}=\left(\frac{\sigma_1
-m}{2}\right)^2+\frac{\sigma_2-6\sigma_1+5m}{12},\label{y17}\\
\frac{\Psi^{(3)}_{z=1}}{\pi_m}&\!\!=\!\!&\left(\frac{\sigma_1-m}{2}-1\right)
\left[\left(\frac{\sigma_1-m}{2}\right)^2+\frac{\sigma_2-4\sigma_1+3m}{4}
\right],\nonumber\\
\frac{\Psi^{(4)}_{z=1}}{\pi_m}&\!\!=\!\!&\frac1{3}\left(\frac{\sigma_2-6\sigma_1
+5m}{4}\right)^2+\left(\frac{\sigma_1-m}{2}\right)^2\frac{\sigma_2-6\sigma_1+
5m}{2}+\left(\frac{\sigma_1-m}{2}\right)^4-\nonumber\\
&&\frac{\sigma_1-m}{2}(\sigma_2-4\sigma_1+3m)-\frac{\sigma_4-110\sigma_2+
360\sigma_1-251m}{120},\nonumber
\eea
where $\sigma_k=\sum_{j=1}^md_j^k$ are power sums of generators $d_j$. 
Substituting (\ref{y16},\ref{y17}) into formulas (\ref{y15}), we arrive at 
expressions for $\Pi^{(r)}_{z=1}$,
\bea
\frac{\Pi^{(0)}_{z=1}}{\pi_m}&\!\!=\!\!&1,\qquad
\frac{\Pi^{(1)}_{z=1}}{\pi_m}=\frac{\sigma_1-m}{2}+G_0,\label{y18}\\
\frac{\Pi^{(2)}_{z=1}}{\pi_m}&\!\!=\!\!&\left(\frac{\sigma_1-m}{2}\right)^2+
\frac{\sigma_2-6\sigma_1+5m}{12}+(\sigma_1-m)G_0+2G_1,\nonumber\\
\frac{\Pi^{(3)}_{z=1}}{\pi_m}&\!\!=\!\!&\left(\frac{\sigma_1-m}{2}-1\right)
\left[\left(\frac{\sigma_1-m}{2}\right)^2+\frac{\sigma_2-4\sigma_1+3m}{4}\right]
+\nonumber\\
&&\left[3\left(\frac{\sigma_1-m}{2}\right)^2+\frac{\sigma_2-6\sigma_1+5m}{4}
\right]G_0+3(\sigma_1-m)G_1+3(G_2-G_1),\nonumber\\
\frac{\Pi^{(4)}_{z=1}}{\pi_m}&\!\!=\!\!&\frac1{3}\left(\frac{\sigma_2-6\sigma_1
+5m}{4}\right)^2+\left(\frac{\sigma_1-m}{2}\right)^2\frac{\sigma_2-6\sigma_1
+5m}{2}+\left(\frac{\sigma_1-m}{2}\right)^4-\nonumber\\
&&\frac{\sigma_1-m}{2}(\sigma_2-4\sigma_1+3m)-\frac{\sigma_4-110\sigma_2+
360\sigma_1-251m}{120}+\nonumber\\
&&\left(\frac{\sigma_1-m}{2}-1\right)\left[(\sigma_1-m)^2+\sigma_2-4\sigma_1+3m
\right]G_0+\nonumber\\
&&\left[3(\sigma_1-m)^2+\sigma_2-6\sigma_1+5m\right]G_1+6(\sigma_1-m)(G_2-G_1)
+4(G_3-3G_2+2G_1).\nonumber
\eea
\section{Linear equations for alternating power sums ${\mathbb C}_k(S_m)$}
\label{l3}
In the right hand side of expression (\ref{y9}) for $Q^{(r)}_z(1)$, there 
survives a solely one term, namely, when $k=m-1$. Combining the resulting 
expression in (\ref{y9}) with (\ref{y11}), we obtain,
\bea
(-1)^m(m-1)!\;{r\choose m-1}\Pi^{(r-m+1)}_{z=1}=\sum_{k=m-1}^r{\mathfrak S}^r_k
{\mathbb C}_k(S_m),\qquad r\ge m-1.\nonumber
\eea
Represent the last equation in a more convenient way by shifting the variable 
$r$, i.e., $r=m+p$,
\bea
\sum_{k=m-1}^{m+p}{\mathfrak S}^{m+p}_k{\mathbb C}_k(S_m)=(-1)^m\frac{(m+p)!}
{(1+p)!}\;\Pi^{(p+1)}_{z=1},\qquad p\ge -1.\label{y19}
\eea
Thus, we arrive at the matrix equation with $p+2$ variables ${\mathbb C}_k(S_m
)$, where $k=m-1,\ldots,m+p$,
\bea
\left(\begin{array}{ccccc}{\mathfrak S}^{m-1}_{m-1}&0&0&\ldots&0\\
{\mathfrak S}^m_{m-1}&{\mathfrak S}^m_m&0&\ldots&0\\
{\mathfrak S}^{m+1}_{m-1}&{\mathfrak S}^{m+1}_m&{\mathfrak S}^{m+1}_{m+1}&
\ldots&0\\
\ldots&\ldots&\ldots&\ldots&0\\
{\mathfrak S}^{m+p}_{m-1}&{\mathfrak S}^{m+p}_m&{\mathfrak S}^{m+p}_{m+1}&
\ldots&{\mathfrak S}^{m+p}_{m+p}\end{array}\right)\!
\left(\begin{array}{l}{\mathbb C}_{m-1}(S_m)\\{\mathbb C}_m(S_m)\\
{\mathbb C}_{m+1}(S_m)\\\ldots\\
{\mathbb C}_{m+p}(S_m)\end{array}\right)=(-1)^m\left(\begin{array}{r}(m-1)!\;
\Pi^{(0)}_{z=1}\\m!\;\Pi^{(1)}_{z=1}\\\frac{(m+1)!}{2!}\;\Pi^{(2)}_{z=1}\\
\ldots\\
\frac{(m+p)!}{(p+1)!}\;\Pi^{(p+1)}_{z=1}\end{array}\right),\quad\label{y20}\\
\nonumber
\eea

\noindent
where, according to definition of the Stirling numbers (\ref{y12}), we have 
in a diagonal ${\mathfrak S}^r_r=1$, $r\ge 0$.

The general solution of matrix equation (\ref{y20}) may be written as follows,
\bea
{\mathbb C}_{m+p}(S_m)&=&(-1)^m\frac{(m+p)!}{(p+1)!}\Pi^{(p+1)}_{z=1}-
\sum_{j=1}^{p+1}(-1)^j\left[m+p\atop m+p-j\right]{\mathbb C}_{m+p-j}(S_m),\quad
\mbox{e.g.,}\quad\label{y21}\\
{\mathbb C}_{m-1}(S_m)&=&(-1)^m(m-1)!\;\Pi^{(0)}_{z=1},\nonumber\\
{\mathbb C}_m(S_m)&=&(-1)^mm!\;\Pi^{(1)}_{z=1}+\left[m\atop m-1\right]{\mathbb 
C}_{m-1}(S_m),\nonumber\\
{\mathbb C}_{m+1}(S_m)&=&(-1)^m\frac{(m+1)!}{2}\;\Pi^{(2)}_{z=1}+\left[m+1\atop
m\right]{\mathbb C}_m(S_m)-\left[m+1\atop m-1\right]{\mathbb C}_{m-1}(S_m).
\nonumber
\eea
Calculating ${\mathbb C}_{m+p}(S_m)$ in (\ref{y21}) by consecutive substitution
of ${\mathbb C}_{m+q-1}(S_m)$ into ${\mathbb C}_{m+q}(S_m)$, where $q=0,\ldots,
p$, we arrive at the final expression,
\bea
{\mathbb C}_n(S_m)=\frac{(-1)^mn!}{(n-m)\:!}\;\pi_mK_{n-m},\qquad 
{\mathbb C}_{m-1}(S_m)=(-1)^m(m-1)\:!\:\pi_m,\label{y22}
\eea
where a coefficient $K_p$ is a linear combination of genera $G_0,\ldots,G_p$. 
We present here expressions 
\footnote{Formulas for $K_0,K_1,K_2,K_3$ were calculated by consecutive 
substitution of expressions (\ref{y22}) and (\ref{y18}) into (\ref{y21}). The 
other three formulas for $K_4,K_5,K_6$ were found in two steps: 1) by analytic 
derivations (with help of Mathematica software) of expressions for $\Psi^{(r)}
_{z=1}$ and $\Pi^{(r)}_{z=1}$, $r=5,6,7$, which are extremely lengthy to be 
disposed in the paper, 2) by consecutive substitution of the found expressions 
for $\Pi^{(r)}_{z=1}$ and ${\mathbb C}_{m+r-1}(S_m)$ into (\ref{y21}).}
for $K_p$ when $p\le 6$,
\bea
K_0&=&G_0+\delta_1,\hspace{4cm}\delta_p=\frac{\sigma_p-1}{2^p},\label{y23}\\
K_1&=&G_1+\frac{\sigma_1}{2}G_0+\frac{3\delta_1^2+\delta_2}{6},\nonumber\\
K_2&=&G_2+\sigma_1G_1+\frac{3\sigma_1^2+\sigma_2}{12}G_0+\frac{\delta_1\left(
\delta_1^2+\delta_2\right)}{3},\nonumber\\
K_3&=&G_3+\frac{3}{2}\sigma_1G_2+\frac{3\sigma_1^2+\sigma_2}{4}G_1+\frac{
\sigma_1(\sigma_1^2+\sigma_2)}{8}G_0+\frac{15\delta_1^4+30\delta_1^2\delta_2+5
\delta_2^2-2\delta_4}{60}.\nonumber\\
K_4&=&G_4+2\sigma_1G_3+\frac{3\sigma_1^2+\sigma_2}{2}G_2+\frac{\sigma_1(
\sigma_1^2+\sigma_2)}{2}G_1+\nonumber\\
&&\frac{15\sigma_1^4+30\sigma_1^2\sigma_2+5\sigma_2^2-2\sigma_4}{240}G_0+
\delta_1\frac{3\delta_1^4+10\delta_1^2\delta_2+5\delta_2^2-2\delta_4}{15},
\nonumber\\
K_5&=&G_5+\frac{5}{2}\sigma_1G_4+\frac{5}{6}\left(3\sigma_1^2+\sigma_2\right)G_3
+\frac{5}{4}\sigma_1(\sigma_1^2+\sigma_2)G_2+\nonumber\\
&&\frac{15\sigma_1^4+30\sigma_1^2\sigma_2+5\sigma_2^2-2\sigma_4}{48}G_1+
\sigma_1\frac{3\sigma_1^4+10\sigma_1^2\sigma_2+5\sigma_2^2-2\sigma_4}{96}G_0+
\nonumber\\
&&\frac{63\delta_1^6+315\delta_1^4\delta_2+315\delta_1^2\delta_2^2-126\delta_1^2
\delta_4+35\delta_2\delta_4-42\delta_2^3+16\delta_6}{378}
\nonumber\\
K_6&=&G_6+3\sigma_1G_5+\frac{5}{4}\left(3\sigma_1^2+\sigma_2\right)G_4+
\frac{5}{2}\sigma_1(\sigma_1^2+\sigma_2)G_3+\nonumber\\
&&\frac{15\sigma_1^4+30\sigma_1^2\sigma_2+5\sigma_2^2-2\sigma_4}{16}G_2+
\sigma_1\frac{3\sigma_1^4+10\sigma_1^2\sigma_2+5\sigma_2^2-2\sigma_4}{16}G_1+
\nonumber\\
&&\frac{63\sigma_1^6+315\sigma_1^4\sigma_2+315\sigma_1^2\sigma_2^2-126\sigma_1^2
\sigma_4+35\sigma_2\sigma_4-42\sigma_2^3+16\sigma_6}{4032}G_0+
\nonumber\\
&&\delta_1\frac{9\delta_1^6+63\delta_1^4\delta_2+105\delta_1^2\delta_2^2-42
\delta_1^2\delta_4+35\delta_2^3-42\delta_2\delta_4+16\delta_6}{63}.\nonumber
\eea

A straightforward calculation of higher $K_p$ (even with help of Mathematica 
software) encounters with enormous technical difficulties. On the other hand, 
a careful observation of formulas (\ref{y23}) allows to put forward a conjecture
about a general formula for $K_p$ for arbitrary $p$, which is related to a 
special kind of symmetric polynomials $P_n=P_n(x_1,\ldots,x_m)$ of degree $n$ in
$m$ variables, discussed in \cite{fl20},
\bea
P_n=\sum_{j=1}^mx_j^n-\sum_{j>r=1}^m\left(x_j+x_r\right)^n+\sum_{j>r>i=1}^m
\left(x_j+x_r+x_i\right)^n-\ldots-(-1)^m\left(\sum_{j=1}^mx_j\right)^n.
\label{y24}
\eea
In what follows, we make use of a remarkable property of polynomials $P_n$: its 
factorization reads \cite{fl20},
\bea
P_n(x_1,\ldots,x_m)=\frac{(-1)^{m+1}n!}{(n-m)!}\;\chi_mT_{n-m}(x_1,\ldots,x_m),
\qquad\chi_m=\prod_{j=1}^mx_j,\quad T_0=1,\label{y25}
\eea
where $T_r(x_1,\ldots,x_m)$ is a symmetric polynomial of degree $r$ in $m$ 
variables. According to \cite{fl20}, this polynomial satisfies inequality,
\bea
T_r(x_1,\ldots,x_m)>0,\qquad x_1,\ldots,x_m>0,\label{y26}
\eea

Denote $X_k=\sum_{j=1}^mx_j^k$ and, by a fundamental theorem of symmetric 
polynomials \cite{mc95}, use such power sums as a basis for algebraic Rep of 
polynomials $T_r$. In other words, instead of $T_r(x_1,\ldots,x_m)$, we make use
in (\ref{y25}) of polynomials $T_r(X)=T_r(X_1,\ldots,X_r)$, which were derived 
in \cite{fl20} and presented in Appendix \ref{appendix3}. To pose a conjecture, 
define two polynomials $T_r(\sigma)=T_r(\sigma_1,\ldots,\sigma_r)$ and $T_r(
\delta)=T_r(\delta_1,\ldots,\delta_r)$ by replacing $X_k\to\sigma_k$ and $X_k
\to\delta_k$ in $T_r(X_1,\ldots,X_r)$, where $\sigma_k$ and $\delta_k$ are 
defined in (\ref{y17}) and (\ref{y23}), respectively.
\begin{conjecture}\label{con1}
Let a semigroup $S_m\!=\!\la d_1,\ldots,d_m\ra$ be given and $G_r$ denote its 
genera  according to (\ref{y16}). Then the alternating power sums ${\mathbb C}
_k(S_m)$ in (\ref{y5}) are given by (\ref{y22}) with $K_p$ as follows
\bea
K_p=\sum_{r=0}^p{p\choose r}T_{p-r}(\sigma)G_r+\frac{2^{p+1}}{p+1}T_{p+1}
(\delta),\label{y27}
\eea
\end{conjecture}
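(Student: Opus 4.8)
The plan is to prove the identity at the level of exponential generating functions rather than coefficient by coefficient. The key device is the Euler operator $\theta=z\,d/dz$, for which $\theta^n\!\left(\sum_s a_sz^s\right)\!\big|_{z=1}=\sum_s a_s s^n$; applied to $Q(S_m;z)$ in (\ref{y3}) it gives at once ${\mathbb C}_n(S_m)=-\theta^nQ|_{z=1}$ for $n\ge1$ (the constant term is killed), which repackages the Stirling-number relations (\ref{y11}),(\ref{y16}) without bookkeeping. Next I would make $Q$ multiplicative: from $H=Q/\prod_i(1-z^{d_i})$ in (\ref{y2}), $H=\frac1{1-z}-\Phi$ in (\ref{y7}), and $F(z):=\prod_i(1-z^{d_i})=(1-z)^m\Psi(S_m;z)$, one gets
\[
Q(S_m;z)=\frac{F(z)}{1-z}-F(z)\,\Phi(S_m;z),\qquad\mbox{so}\qquad {\mathbb C}_{m+p}=-\theta^{m+p}\!\left[\frac{F}{1-z}\right]_{z=1}+\theta^{m+p}[F\Phi]_{z=1}.
\]
Finally I would pass to the additive variable $z=e^t$, under which $\theta=d/dt$ and $\theta^N[\,\cdot\,]_{z=1}=N!\,[t^N](\,\cdot\,)$, so that every evaluation becomes a Taylor coefficient.

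The central lemma is the generating function for the $T_r$. Writing $\phi(u)=(e^u-1)/u$ and substituting the definition (\ref{y24}) into $\sum_nP_n\,t^n/n!=-\prod_j(1-e^{x_jt})=(-1)^{m+1}t^m\chi_m\prod_j\phi(x_jt)$, the factorization (\ref{y25}) collapses to
\[
\sum_{r\ge0}\frac{T_r(x)}{r!}\,t^r=\prod_{j=1}^m\phi(x_jt),\qquad\mbox{hence}\qquad\sum_{r\ge0}\frac{T_r(X)}{r!}\,t^r=\exp\!\Big(\sum_{k\ge1}\lambda_kX_kt^k\Big),
\]
with $\lambda_k=[u^k]\log\phi(u)$ and $X_k=\sum_jx_j^k$. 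This simultaneously shows $P_n=0$ for $n<m$ and, crucially, that $T_r(X_1,\dots,X_r)$ is an $m$-independent universal polynomial in the power sums — which is what makes $T_r(\sigma)$ and $T_r(\delta)$ meaningful. In particular $F|_{z=e^t}=(-1)^m\pi_mt^m\sum_r\frac{T_r(\sigma)}{r!}t^r$, while $\Phi|_{z=e^t}=\sum_{q\ge0}\frac{G_q}{q!}t^q$.

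With these in hand the two pieces fall out. For the genus part, $[t^{m+p}](F\Phi)=(-1)^m\pi_m\sum_{r=0}^p\frac{T_r(\sigma)}{r!}\frac{G_{p-r}}{(p-r)!}$, and multiplying by $(m+p)!$ and dividing by the prefactor $(-1)^m\pi_m(m+p)!/p!$ in (\ref{y22}) yields exactly $\sum_{r=0}^p{p\choose r}T_{p-r}(\sigma)G_r$. For the $\delta$ part, $\frac{F}{1-z}\big|_{z=e^t}=(-1)^{m-1}\pi_mt^{m-1}\Theta(t)$ with $\Theta(t)=\frac{\prod_j\phi(d_jt)}{\phi(t)}$, so the decisive step is the logarithm
\[
\log\Theta(t)=\sum_{k\ge1}\lambda_k(\sigma_k-1)t^k=\sum_{k\ge1}\lambda_k\delta_k(2t)^k,
\]
using $\sigma_k-1=2^k\delta_k$ from (\ref{y23}). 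Comparing with the universal generating function above, evaluated at the alphabet $\delta$ and argument $2t$, gives $\Theta(t)=\sum_r\frac{2^r}{r!}T_r(\delta)\,t^r$, whence $[t^{p+1}]\Theta=\frac{2^{p+1}}{(p+1)!}T_{p+1}(\delta)$ and, after the prefactor, the correction $\frac{2^{p+1}}{p+1}T_{p+1}(\delta)$. Adding the two contributions and reindexing reproduces (\ref{y27}).

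The main obstacle — and the reason the statement is posed as a conjecture rather than proved — is recognizing this last correspondence: that dividing by the extra factor $1-z$ (equivalently by $\phi(t)$) shifts $\sigma_k\mapsto\sigma_k-1$ inside $\log\Theta$, and that this shift combines with the $2^k$ hidden in $\delta_k$ so that $\Theta$ is precisely the $T$-generating function evaluated at the \emph{virtual alphabet} $\delta$ with argument rescaled by $2$. Once the generating function $\prod_j\phi(x_jt)$ for the $T_r$ is isolated, both the factor $2^{p+1}$ and the appearance of $T_{p+1}(\delta)$ in place of $T_{p+1}(\sigma)$ are forced; the remaining work (the $z=e^t$ substitution and the binomial/Cauchy-product simplifications) is routine.
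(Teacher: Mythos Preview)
Your argument is correct and in fact \emph{proves} what the paper only states as a conjecture. The paper offers no proof of (\ref{y27}); it verifies the formula for $p\le6$ by computing $\Psi^{(r)}_{z=1}$ and $\Pi^{(r)}_{z=1}$ via iterated Leibniz rules (section~\ref{l22}, Appendix~\ref{appendix2}), solving the triangular Stirling-number system (\ref{y20})--(\ref{y21}) by successive substitution, and then recognising the pattern (see the footnote accompanying (\ref{y23})).

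Your route is genuinely different and substantially cleaner. The substitution $z=e^{t}$ turns the Euler operator into $d/dt$ and the alternating sums ${\mathbb C}_n(S_m)$ into Taylor coefficients, so the Stirling numbers of (\ref{y11}) never appear. The decisive step the paper lacks is the exponential generating function
\[
\sum_{r\ge0}\frac{T_r(X)}{r!}\,t^{r}=\prod_{j}\phi(x_jt)=\exp\!\Big(\sum_{k\ge1}\lambda_kX_kt^{k}\Big),\qquad\phi(u)=\frac{e^{u}-1}{u},
\]
for the polynomials $T_r$ of (\ref{y25}); this simultaneously establishes that $T_r$ is a universal polynomial in the power sums $X_1,\dots,X_r$ (independent of $m$), which is what makes the substitutions $T_r(\sigma)$, $T_r(\delta)$ meaningful, and it identifies $F(e^{t})=(-1)^m\pi_mt^{m}\sum_rT_r(\sigma)t^r/r!$ so that the Cauchy product with $\Phi(e^{t})=\sum_qG_qt^q/q!$ yields the binomial sum $\sum_r\binom{p}{r}T_{p-r}(\sigma)G_r$ directly. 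The second contribution, from $F/(1-z)$, is handled by the neat observation
\[
\log\Theta(t)=\log\frac{\prod_j\phi(d_jt)}{\phi(t)}=\sum_{k\ge1}\lambda_k(\sigma_k-1)t^{k}=\sum_{k\ge1}\lambda_k\delta_k(2t)^{k},
\]
using $\sigma_k-1=2^{k}\delta_k$ from (\ref{y23}); comparing with the universal generating function at the virtual alphabet $\delta$ and argument $2t$ gives $\Theta(t)=\sum_r2^{r}T_r(\delta)t^{r}/r!$, which forces both the factor $2^{p+1}/(p+1)$ and the appearance of $T_{p+1}(\delta)$ in (\ref{y27}). Where the paper's ordinary-derivative scheme is inherently bounded by the combinatorial explosion of $\Psi^{(r)}_{z=1}$, your approach closes the identity for all $p$ in one stroke; it also explains structurally why the free term involves $\delta$ rather than $\sigma$, something the paper can only observe empirically.
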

If (\ref{y27}) holds for any $p$, then, combining it with (\ref{y26}) and 
keeping in mind $\sigma_i,\delta_i>0$, we get $K_p>0$.
\section{Algebraic equations for $K_r$ in numerical semigroups}\label{l4}
Consider a numerical semigroup $S_m$ and write equalities (\ref{y6}) and 
(\ref{y22}) for alternating sums ${\mathbb C}_k(S_m)$, defined in (\ref{y5}), 
as a system of non-homogeneous polynomial equations for positive integer 
variables $C_{k,j}$. For convenience, rename $C_{k,j}$ by one-index variable 
$z_i$ in such a way, that $i$ runs through the two-index $(k,j)$ table, 
enumerating elements of the 1st and following rows successively,
\bea
\quad
z_1\!=\!C_{1,1},\;\;z_2\!=\!C_{1,2},\;\;\ldots,\;\;z_{\beta_1}\!=\!C_{1,\beta_1}
,\;\;z_{\beta_1+1}\!=\!C_{2,1},\;\;z_{\beta_1+2}\!=\!C_{2,2},\;\;\ldots,\;\;
z_{\zeta_m}\!=\!C_{m-1,\beta_{m-1}},\nonumber
\eea
where a total number $\zeta_m\!=\!\#\{z_i\}$ of independent variables $z_i$ is 
dependent on inner properties of $S_m$, 

\noindent
e.g., non-symmetric, symmetric (not CI), symmetric CI and others more 
sophisticated (Weierstrass', Arf's, hyperelliptic {\em etc}). We consider here 
the three basic kind of semigroups mentioned above.

A study of $n+1$ {\it non-homogeneous} polynomial equations $f_j(z_1,\ldots,z_n)
=0$ in $n$ variables $z_i$ goes back to classical works of B\'ezout, Sylvester, 
Caley and Macaulay \cite{ma02}, who has mostly considered an equivalent problem 
with $n+1$ {\it homogeneous} polynomial equations in $n+1$ variables. The use of
a multivariate resultant ${\mathcal Res}\left\{f_0,f_1,\ldots,f_n\right\}$, 
which is an irreducible polynomial over a ring $A[f_0,f_1,\ldots,f_n]$, 
generated by coefficients of $f_j$, and vanishes whenever all polynomials $f_j$ 
have a common root, is a standard computational tool in the elimination theory. 
An interest in finding explicit formulas for resultants, extending Macaulay's 
formulas as a quotient of two determinants, has been renewed in the 1990th 
(see \cite{gkz4} and references therein). 

Bearing in mind a special form of equations (\ref{y6}) and (\ref{y22}), we 
consider here the most general properties of these equations: the existence of 
an algebraic relation among $K_p$, which entered in (\ref{y22}), and its 
rescaled version, given below. Rewrite polynomial equations (\ref{y6}) and 
(\ref{y22}) in new notations,
\bea
&&\Gamma_k(z_1,\ldots,z_{\zeta_m},L_k)=0,\quad k\ge 1,\qquad\zeta_m=B_m,
\label{y28}\\
&&L_k=\left\{\begin{array}{lll}0,&if&1\le k\le m-2,\\(-1)^m(m-1)!\pi_m,&if&k=m
-1,\\(-1)^m\frac{k!}{(k-m)\:!}\;\pi_mK_{k-m},&if&k\ge m,\end{array}\right.
\nonumber
\eea
where $\Gamma_k(z_1,\ldots,z_{\zeta_m},L_k)$ is a homogeneous polynomial of 
degree $k$ with respect to all variables $z_i$ and linear in $L_k$, and $B_m=
\sum_{k=1}^{m-1}\beta_k$ denotes the total Betti number of non-symmetric 
semigroups. 

Making use of a homogeniety of the polynomial $\Gamma_k(\xi_1,\ldots,\xi_{
\zeta_m},\ell_k)$, rescale the variables and the whole equation (\ref{y28}) as 
follows,
\bea
&&\Gamma_k(\xi_1,\ldots,\xi_{\zeta_m},\ell_k)=0,\quad k\ge 1,\qquad\xi_j=
\frac{z_j}{\upsilon_m},\quad\upsilon_m=\pi_m^{1/(m-1)},\label{y29}\\
&&\ell_k=\left\{\begin{array}{lll}0,&if&1\le k\le m-2,\\(-1)^m(m-1)!&if&k=m-1,
\\(-1)^m\frac{k!}{(k-m)\:!}\;\varkappa_{k-m},&if&k\ge m,\qquad\varkappa_p=K_p
\upsilon_m^{-(p+1)}\end{array}\right.\label{y30}
\eea
where according to (\ref{y5}), the polynomial $\Gamma_k(\xi_1,\ldots,\xi_{
\zeta_m},\ell_k)$ for the arbitrary non-symmetric semigroup $S_m$ reads,
\bea
\Gamma_k(\xi_1,\ldots,\xi_{\zeta_m},\ell_k)=\sum_{j=1}^{\beta_1}\xi_j^k-
\sum_{j=\beta_1+1}^{\beta_1+\beta_2}\xi_j^k+\ldots+(-1)^m\!\!\!\sum_{j=\zeta_m-
\beta_{m-1}}^{\zeta_m}\!\!\!\xi_j^k-\ell_k.\quad\label{y31}
\eea
\begin{theorem}\label{the1}
Let $S_m$ be a non-symmetric semigroup with the Hilbert series given in 
(\ref{y1}). Then there exists an algebraic equation in $g_m+1$ variables, 
$\varkappa_0,\varkappa_1,\ldots,\varkappa_{g_m}$,
\bea
{\mathcal R}_K(\varkappa_0,\varkappa_1,\ldots,\varkappa_{g_m-1},\varkappa_{g_m})
=0,\qquad g_m=B_m-m+1,\label{y32}
\eea
and the polynomial ${\mathcal R}_K$ is irreducible over a ring $A[\varkappa_0,
\varkappa_1,\ldots,\varkappa_{g_m-1},\varkappa_{g_m}]$.
\end{theorem}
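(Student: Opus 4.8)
The plan is to read (\ref{y28})--(\ref{y31}) as an elimination problem. The rescaled identities $\Gamma_k(\xi_1,\ldots,\xi_{\zeta_m},\ell_k)=0$ form an infinite family in the $\zeta_m=B_m$ unknowns $\xi_i$, and by (\ref{y30}) the only parameters entering the system are the $\varkappa_p$, each occurring solely in the constant term $\ell_{m+p}$ of the single equation $\Gamma_{m+p}$. As $k$ runs from $m$ to $B_m$ the index $k-m$ runs from $0$ to $B_m-m=g_m-1$, so $\Gamma_1,\ldots,\Gamma_{B_m}$ involve exactly $\varkappa_0,\ldots,\varkappa_{g_m-1}$, while the next equation $\Gamma_{B_m+1}$ introduces $\varkappa_{g_m}$ and nothing more. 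Hence precisely the $g_m+1$ parameters of (\ref{y32}) appear among $\Gamma_1,\ldots,\Gamma_{B_m+1}$, and I would obtain $\mathcal{R}_K$ by eliminating the $B_m$ variables $\xi_i$ from these $B_m+1$ equations, either through the multivariate resultant of Macaulay and \cite{gkz4,ma02} or, to avoid the degeneracies of the dense resultant at infinity (the leading forms $\sum_i\epsilon_i\xi_i^k$ do share roots), through the elimination ideal $(\Gamma_1,\ldots,\Gamma_{B_m+1})\cap A[\varkappa_0,\ldots,\varkappa_{g_m}]$.

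For existence I would bypass non-degeneracy questions and argue by dimension. Write $\epsilon_i\in\{+1,-1\}$ for the sign attached to $\xi_i$ in (\ref{y31}) and set $W=\{\xi:\Gamma_k(\xi,\ell_k)=0,\ 1\le k\le m-1\}$, the locus where the first $m-1$ signed power sums take the constant values prescribed in (\ref{y30}). Its Jacobian is $(k\,\epsilon_i\,\xi_i^{k-1})_{1\le k\le m-1,\,1\le i\le B_m}$, which contains an invertible Vandermonde minor wherever $m-1$ of the $\xi_i$ are distinct and nonzero; at the genuine rescaled syzygy point $\xi^{*}=(z_1,\ldots,z_{\zeta_m})/\upsilon_m$ this holds, so $\xi^{*}$ is a smooth point on a component $W_0\subseteq W$ of dimension $B_m-(m-1)=g_m$. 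On $W_0$ the functions $\varkappa_0,\ldots,\varkappa_{g_m}$ are, up to the nonzero scalars of (\ref{y30}), the signed power sums $\sum_i\epsilon_i\xi_i^{\,m+p}$ for $0\le p\le g_m$; these are $g_m+1$ regular functions on a variety of dimension $g_m$, hence algebraically dependent. The resulting relation is a nontrivial $\mathcal{R}_K$, and because $\xi^{*}\in W_0$ it holds at the true values $\varkappa_p$ of the semigroup, establishing (\ref{y32}) together with $\mathcal{R}_K\not\equiv0$.

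The irreducibility assertion is the crux, since the generic multivariate resultant is irreducible only in its full ring of coefficients, whereas here almost all coefficients are frozen to $\pm1$ and the statement concerns the heavily specialized polynomial. I would recast $\mathcal{R}_K$ as the defining polynomial of the Zariski closure $\overline{Y}$ of the image of $W_0$ under $\varkappa=(\varkappa_0,\ldots,\varkappa_{g_m}):W_0\to\mathbb{A}^{\,g_m+1}$. As the image of an irreducible variety, $\overline{Y}$ is irreducible, so its defining polynomial is irreducible over $A[\varkappa_0,\ldots,\varkappa_{g_m}]$ as soon as $\overline{Y}$ is a hypersurface, i.e.\ $\dim\overline{Y}=g_m$. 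The whole statement thus reduces to two geometric facts: (a) the power-sum variety $W_0$ is irreducible, and (b) the coordinates $\varkappa_0,\ldots,\varkappa_{g_m-1}$ are already algebraically independent on $W_0$, so that $\varkappa|_{W_0}$ is generically finite and $\overline{Y}$ has codimension one. I expect (a)--(b) to be the main obstacle: proving irreducibility of the complete intersection cut out by the first signed power sums, and controlling the transcendence degree so that exactly one relation survives, amounts to a monodromy/Galois-transitivity statement for the solutions of the power-sum system. I would settle it by an irreducibility analysis of that complete intersection and verify the base case $S_3$, where $B_3=5$ and $g_3=3$, by the explicit elimination carried out in section \ref{l4}.
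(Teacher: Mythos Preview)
Your approach differs from the paper's in a meaningful way, and in one respect is more careful. The paper proceeds by iterated bivariate resultants: starting from the $B_m+1$ equations $\Gamma_1,\ldots,\Gamma_{B_m+1}$, it eliminates $\xi_1$ by taking ${\mathcal Res}_{\xi_1}\{\Gamma_1,\Gamma_j\}$ for $j\ge 2$, then eliminates $\xi_2$ from the resulting system, and so on until a single equation in the $\ell_k$ alone remains. Irreducibility is asserted at every step by citing the general fact that the resultant of two polynomials is irreducible. Your proposal instead reads the problem geometrically: the first $m-1$ equations cut out a variety $W_0$ of dimension $g_m$ (verified by a Jacobian computation at the true syzygy point), and the $g_m+1$ functions $\varkappa_0,\ldots,\varkappa_{g_m}$, being regular on a $g_m$-dimensional variety, must satisfy a polynomial relation. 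This existence argument is cleaner than the paper's cascade of resultants and avoids any non-degeneracy hypotheses on those resultants.

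On irreducibility you have put your finger on the genuine difficulty, which the paper passes over. The irreducibility theorems in \cite{gkz4,hej18} concern the resultant as a polynomial in \emph{all} coefficients of the input forms; here every coefficient except the constant term $\ell_k$ is frozen to $\pm 1$, so those theorems do not apply as stated, and the paper's per-step irreducibility claims are not justified by the citations given. Your reformulation---${\mathcal R}_K$ is the defining equation of $\overline{\varkappa(W_0)}$, hence irreducible provided $W_0$ is irreducible and $\varkappa$ is generically finite onto its image---is the correct framework, and you are right that items (a) and (b) are where the work lies. Be aware that (a) is not automatic: varieties cut out by signed power sums need not be irreducible in general, so you will need to exploit the specific sign pattern coming from the Betti numbers (or restrict to a component through $\xi^*$ and argue that the relation obtained on that component is the one you want). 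In short, your route and the paper's route leave the same gap open, but your diagnosis of it is sharper.
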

\begin{proof}
Choose
\footnote{Note that inequality $B_m>m-1$ holds, since, according to \cite{rs09},
we have $\beta_1\ge m-1$ while the other $\beta_k$ are positive.}
the first $B_m+1$ polynomial equations (\ref{y29}) in $B_m$ variables $\xi_1,
\ldots,\xi_{B_m}$ and build a new system of $B_m$ equations in $B_m-1$ variables
$\xi_2,\ldots,\xi_{B_m}$ by eliminating $\xi_1$ in resultants ${\mathcal Res}_1
\left\{\Gamma_1,\Gamma_j\right\}$,
\bea
&&\Gamma_j^1(\xi_2,\ldots,\xi_{B_m},\ell_1,\ell_j)=0,\qquad j=2,\ldots,B_m+1,
\label{y33}\\
&&\Gamma_j^1(\xi_2,\ldots,\xi_{B_m},\ell_1,\ell_j)={\mathcal Res}_1\left\{
\Gamma_1(\xi_1,\ldots,\xi_{B_m},\ell_1),\Gamma_j(\xi_1,\ldots,\xi_{B_m},\ell_j)
\right\}.\nonumber
\eea
The polynomial $\Gamma_j^1$ in (\ref{y33}) is irreducible \cite{gkz4} over a 
ring $A[\xi_2,\ldots,\xi_{B_m},\ell_1,\ell_j]$ (see a detailed proof of a 
resultant irreducibility for two polynomials in \cite{hej18}). 

At the 2nd step, choose the first $B_m$ polynomial equations (\ref{y33}) in 
$B_m-1$ variables $\xi_2,\ldots,\xi_{B_m}$ and build $B_m-1$ equations in $B_m-
2$ variables $\xi_3,\ldots,\xi_{B_m}$ by eliminating $\xi_2$ in resultants 
${\mathcal Res}_2\left\{\Gamma_j^1,\Gamma_2^1\right\}$,
\bea
&&\Gamma_j^{1,2}(\xi_3,\ldots,\xi_{B_m},\ell_1,\ell_2,\ell_j)=0,\qquad j=3,
\ldots,B_m+1,\nonumber\\
&&\Gamma_j^{1,2}(\xi_3,\ldots,\xi_{B_m},\ell_1,\ell_2,\ell_j)={\mathcal Res}_2
\left\{\Gamma_2^1(\xi_2,\ldots,\xi_{B_m},\ell_1,\ell_2),\Gamma_j^1(\xi_2,\ldots,
\xi_{B_m},\ell_1,\ell_j)\right\}.\nonumber
\eea
The polynomial $\Gamma_j^{1,2}$ is irreducible \cite{gkz4} over a ring $A[\xi_3,
\ldots,\xi_{B_m},\ell_1,\ell_2,\ell_j]$ by reasons mentioned above. 

Continuing to eliminate the variables $\xi_k$ successively and constructing the 
families of resultants, 
\bea
&&\Gamma_j^{1,\ldots,k}(\xi_{k+1},\ldots,\xi_{B_m},\ell_1,\ldots,\ell_k,\ell_j)
=0,\qquad j=k+1,\ldots,B_m+1,\nonumber\\
&&\Gamma_j^{1,\ldots,k}(\xi_{k+1},\ldots,\xi_{B_m},\ell_1,\ldots,\ell_k,\ell_j)
={\mathcal Res}_k\left\{\Gamma_k^{1,\ldots,k-1},\Gamma_j^{1,\ldots,k-1}\right\},
\nonumber
\eea
we arrive at the $B_m$th step at one resultant equation
{\small
\bea
{\mathcal Res}_{B_m}\left\{\Gamma_{B_m}^{1,\ldots,B_m-1}\left(\xi_{B_m},\ell_1,
\ldots,\ell_{B_m-1},\ell_{B_m}\right),\Gamma_{B_m+1}^{1,\ldots,B_m-1}\left(
\xi_{B_m},\ell_1,\ldots,\ell_{B_m-1},\ell_{B_m+1}\right)\right\}=0.\label{y34}
\eea}
The polynomial ${\mathcal Res}_{B_m}$ in the l.h.s. of (\ref{y34}) is 
irreducible \cite{gkz4} over a ring $A[\ell_1,\ldots,\ell_{B_m-1},\ell_{B_m},
\ell_{B_m+1}]$ as well as all resultants of two polynomials at previous steps.

Equation (\ref{y34}) is free of any variable $\xi_i$ and involves only $B_m+1$
coefficients $\ell_n$, $1\le n\le B_m+1$. Keeping in mind the two first 
relations in (\ref{y30}), namely, $\ell_n=0$ if $1\le n<m-1$, and an 
independence of $\ell_{m-1}$ on $\varkappa_j$, we conclude that equation 
(\ref{y34}) is algebraic in $B_m-m+2$ variables $\ell_m,\ell_{m+1},\ldots,\ell_{
B_m},\ell_{B_m+1}$. However, by the 3rd relation in (\ref{y30}), such equation 
can be represented in $\varkappa_0,\varkappa_1,\ldots,\varkappa_{B_m-m},
\varkappa_{B_m-m+1}$ as given in (\ref{y32}).$\;\;\;\;\;\;\Box$
\end{proof}
\begin{corollary}\label{cor1}
Let $S_m$ be a non-symmetric semigroup with the Hilbert series given in 
(\ref{y1}). Then there exists $g_m$ algebraically independent genera. The set 
of such genera reads,
\bea
\{G_0,G_1,\ldots,G_{g_m-1}\}.\nonumber
\eea
\end{corollary}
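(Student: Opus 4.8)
The plan is to derive the corollary from Theorem~\ref{the1} by combining the single algebraic relation it produces with the triangular way in which the coefficients $K_p$ encode the genera. The first thing I would make explicit is this triangular structure. From (\ref{y15}) the top-genus term of $\Pi^{(r)}_{z=1}/\pi_m$ is $r\,G_{r-1}$, since the only contribution carrying $G_{r-1}$ is the $k=r-1$ summand $r{r-1\choose r-1}\Psi^{(0)}_{z=1}\Phi^{(r-1)}_{z=1}=r\pi_m\Phi^{(r-1)}_{z=1}$ together with $\Phi^{(r-1)}_{z=1}=G_{r-1}+\ldots$ from (\ref{y16}). Feeding this through (\ref{y21})--(\ref{y22}) (equivalently, reading off (\ref{y23}), and consistently with the conjectural form (\ref{y27}) where the coefficient of $G_p$ is $T_0(\sigma)=1$) gives
\[
K_p=G_p+\sum_{r=0}^{p-1}c_{p,r}\,G_r+c_p,
\]
with all $c_{p,r},c_p$ polynomials in the $\sigma_k,\delta_k$ alone. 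Thus $(G_0,\ldots,G_p)\mapsto(K_0,\ldots,K_p)$ is lower triangular with unit diagonal, hence invertible over the field $F=\mathbb{Q}(\sigma_1,\sigma_2,\ldots,\delta_1,\delta_2,\ldots)$; after the nonzero rescaling $\varkappa_p=K_p\upsilon_m^{-(p+1)}$ of (\ref{y30}) the same holds for $(G_0,\ldots,G_p)\mapsto(\varkappa_0,\ldots,\varkappa_p)$. Consequently $F(\varkappa_0,\ldots,\varkappa_p)=F(G_0,\ldots,G_p)$ for every $p$, so any algebraic (in)dependence statement about the $\varkappa$'s transfers verbatim to the $G$'s.

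With this reduction I would prove the two matching bounds on the transcendence degree. For the \emph{upper} bound, Theorem~\ref{the1} supplies an irreducible relation ${\mathcal R}_K(\varkappa_0,\ldots,\varkappa_{g_m})=0$; since it is produced from the extra $(B_m+1)$-th equation it genuinely involves $\varkappa_{g_m}$, and because $\varkappa_{g_m}=\upsilon_m^{-(g_m+1)}G_{g_m}+\ldots$, substituting the triangular expressions yields an algebraic relation among $G_0,\ldots,G_{g_m}$ in which $G_{g_m}$ survives. Hence $G_{g_m}$ is algebraic over $\{G_0,\ldots,G_{g_m-1}\}$.

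For the \emph{lower} bound I would realize the genera as functions on the affine variety $V\subset\mathbb{A}^{B_m}$ carved out in the rescaled syzygy degrees $\xi_1,\ldots,\xi_{B_m}$ by the identities (\ref{y6}), i.e.\ by $\ell_1=\cdots=\ell_{m-2}=0$ and $\ell_{m-1}=(-1)^m(m-1)!$. The full family of alternating power sums $\ell_1,\ldots,\ell_{B_m}$ has Jacobian $\det(k\,\epsilon_i\,\xi_i^{k-1})$, which factors as a nonzero scalar times the Vandermonde $\prod_{i<j}(\xi_j-\xi_i)$ and so is not identically zero; therefore $\ell_1,\ldots,\ell_{B_m}$ are algebraically independent over $F$, the $m-1$ defining constraints are independent, and $\dim V=B_m-(m-1)=g_m$. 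Fixing the first $m-1$ of these functions to constants leaves $\ell_m,\ldots,\ell_{B_m}$, i.e.\ $\varkappa_0,\ldots,\varkappa_{g_m-1}$, algebraically independent on the $g_m$-dimensional $V$. Being $g_m$ independent elements of a function field of transcendence degree $g_m$, they form a transcendence basis, so every regular function on $V$, in particular every $\varkappa_p$ and every genus, is algebraic over them. Translating back through the invertible triangular change of variables, $\{G_0,\ldots,G_{g_m-1}\}$ is an algebraically independent set over which all remaining genera are algebraic, which is exactly the assertion.

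The main obstacle is the lower-bound step: one must be sure that the retained alternating power sums really are algebraically independent on $V$, equivalently that the projection $V\to\mathbb{A}^{g_m}$ by $(\varkappa_0,\ldots,\varkappa_{g_m-1})$ is dominant. The Vandermonde Jacobian settles this at any point with distinct coordinates $\xi_i$, but one should confirm that $V$ is nonempty of the expected dimension and that its generic point has pairwise distinct $\xi_i$, so that the Vandermonde does not degenerate; the inequality $B_m>m-1$ recorded in the footnote to Theorem~\ref{the1} ensures $g_m\ge1$, so the conclusion is non-vacuous.
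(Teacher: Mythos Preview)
Your upper-bound half tracks the paper's proof closely: both pass from Theorem~\ref{the1}'s relation ${\mathcal R}_K(\varkappa_0,\ldots,\varkappa_{g_m})=0$ to a relation among $G_0,\ldots,G_{g_m}$ via the triangular link between the $K_p$ and the $G_r$. The paper, however, obtains that link by invoking (\ref{y23}) and explicitly \emph{assuming Conjecture~\ref{con1}}, whereas you extract the unit leading coefficient $K_p=G_p+\ldots$ directly from (\ref{y15}),(\ref{y16}),(\ref{y21}),(\ref{y22}); this is a genuine improvement, since only the unit diagonal is needed, not the full conjectured formula (\ref{y27}).

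Where you diverge substantially is the lower bound. The paper's argument for the independence of $\{G_0,\ldots,G_{g_m-1}\}$ is essentially a one-line assertion: it appeals to the irreducibility of ${\mathcal R}_G$ and then declares the first $g_m$ genera algebraically independent. You instead set up the affine variety $V$ cut out by the $m-1$ rescaled identities (\ref{y6}), compute $\dim V=g_m$ via the Vandermonde Jacobian, and conclude that $\varkappa_0,\ldots,\varkappa_{g_m-1}$ form a transcendence basis for the function field of $V$. That is a real argument where the paper offers none, and it explains \emph{why} exactly $g_m$ independent quantities survive.

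The residual gap you should be aware of (and which you partly flag) is the meaning of ``algebraically independent genera''. Your argument establishes independence of the $\varkappa_p$ (hence of the $G_r$) as regular functions on the abstract variety $V$ over $F$; it does not by itself show that the locus of points of $V$ arising from genuine non-symmetric numerical semigroups is Zariski dense, i.e.\ that no further relation holds once one restricts to actual semigroups. The paper never addresses this point either, so you are not behind it; but if the corollary is to be read as a statement about semigroups rather than about the formal variety, that density (or some surrogate) is the missing ingredient in both treatments. Your genericity caveats about $V$ being nonempty of expected dimension with generically distinct $\xi_i$ are the right things to check and are routine, but the density-of-semigroup-points issue is of a different nature.
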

\begin{proof}
Combining Theorem \ref{the1} and formulas (\ref{y23}) as well as (\ref{y27}), by
assumption that Conjecture \ref{con1} is true, we arrive at algebraic equation
\bea
{\mathcal R}_G(G_0,G_1,\ldots,G_{g_m-1},G_{g_m})=0,\nonumber
\eea
with an irreducible polynomial ${\mathcal R}_G$ over a ring $A[G_0,G_1,\ldots,G_
{g_m-1},G_{g_m}]$. Resolving the last equation with respect to $G_{g_m}$ and 
keeping in mind an irreduciblity of ${\mathcal R}_G$, we arrive at the algebraic
function $G_{g_m}={\cal F}(G_0,\ldots,G_{g_m-1})$, where the set $\{G_0,G_1,
\ldots,G_{g_m-1}\}$ comprises genera for any numerical semigroup $S_m$, which 
are algebraically independent.$\;\;\;\;\;\;\Box$
\end{proof}
Theorem \ref{the1} may be extended on algebraic equations included $\varkappa_
n$, $n>g_m$, with a similar proof.
\begin{theorem}\label{the2}
There exists an algebraic equation in $g_m+1$ variables $\varkappa_0,\varkappa_1
,\ldots,\varkappa_{g_m-1}$ and $\varkappa_n$,
\bea
{\mathcal R}_K(\varkappa_0,\varkappa_1,\ldots,\varkappa_{g_m-1},\varkappa_n)=0,
\qquad n>g_m,\label{y35}
\eea
and the polynomial ${\mathcal R}_K$ is irreducible over a ring $A[\varkappa_0,
\varkappa_1,\ldots,\varkappa_{g_m-1},\varkappa_n]$.
\end{theorem}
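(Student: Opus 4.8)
The plan is to repeat the successive-resultant elimination of Theorem~\ref{the1} verbatim, changing only \emph{which} of the equations~(\ref{y29}) enter the chain. In that proof we selected the $B_m+1$ equations of degrees $1,2,\ldots,B_m,B_m+1$, whose coefficients $\ell_k$ comprise the zeros ($1\le k\le m-2$), the constant $\ell_{m-1}$, and the genuine variables $\varkappa_0,\ldots,\varkappa_{g_m}$ carried by degrees $k=m,\ldots,B_m+1$. For Theorem~\ref{the2} I would instead choose the $B_m+1$ equations of degrees
\[
k\in\{1,2,\ldots,B_m\}\cup\{n+m\},
\]
keeping everything up to degree $B_m$ but replacing the degree-$(B_m+1)$ equation by the degree-$(n+m)$ one. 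Since $n>g_m=B_m-m+1$ we have $n+m>B_m+1$, so this is a legitimate selection of $B_m+1$ distinct equations in the $B_m$ variables $\xi_1,\ldots,\xi_{B_m}$. By (\ref{y30}) the surviving coefficients are now the constant $\ell_{m-1}$, the variables $\varkappa_0,\ldots,\varkappa_{g_m-1}$ (carried by degrees $m,\ldots,B_m$), and $\varkappa_n$ (carried by degree $n+m$).

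With this selection fixed, I would rename the chosen degree-$(n+m)$ equation as the last equation of the list and run the identical $B_m$-step elimination: at step $k$ eliminate $\xi_k$ through the resultants ${\mathcal Res}_k\{\Gamma_k^{1,\ldots,k-1},\Gamma_j^{1,\ldots,k-1}\}$, each irreducible over the relevant coefficient ring by the same argument (\cite{gkz4,hej18}) used for Theorem~\ref{the1}. Nothing in that argument used the top equation having degree exactly $B_m+1$: the degree-$(n+m)$ polynomial has the same shape $\sum\pm\,\xi_j^{\,n+m}-\ell_{n+m}$ and is linear in its coefficient $\ell_{n+m}$, so it is carried through the chain exactly as $\Gamma_{B_m+1}$ was. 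After the $B_m$th step one is left with a single resultant, free of every $\xi_i$ and irreducible over $A[\ell_1,\ldots,\ell_{B_m},\ell_{n+m}]$. I would then specialize the trivial coefficients precisely as in Theorem~\ref{the1}: $\ell_k=0$ for $1\le k\le m-2$ and $\ell_{m-1}$ constant. The resulting equation is algebraic in the $g_m+1$ quantities $\ell_m,\ldots,\ell_{B_m},\ell_{n+m}$, which by the third line of (\ref{y30}) equal, up to nonzero constant factors, $\varkappa_0,\ldots,\varkappa_{g_m-1},\varkappa_n$, yielding the asserted relation (\ref{y35}).

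The main obstacle is the same soft point as in Theorem~\ref{the1}: one must verify that $\varkappa_n$ genuinely survives into the final resultant (otherwise the claimed equation is vacuous) and that irreducibility persists after specializing the lower coefficients to $0$ and to the constant $\ell_{m-1}$. The presence of $\varkappa_n$ follows because $\ell_{n+m}$ enters linearly and only through $\Gamma_{n+m}$, which is one of the two arguments of the very last resultant; the irreducibility-under-specialization step is the delicate one, since specializing an irreducible polynomial may destroy irreducibility in general. I would control it by tracking, as in \cite{hej18}, that the specialized coefficients occur only in the lower-degree equations whose sole role is to eliminate the $\xi_k$, leaving the dependence on $\varkappa_0,\ldots,\varkappa_{g_m-1},\varkappa_n$ nondegenerate.
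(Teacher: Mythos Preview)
Your proposal is correct and is essentially the paper's own approach: the paper does not spell out a proof of Theorem~\ref{the2} but simply remarks that it ``may be extended on algebraic equations including $\varkappa_n$, $n>g_m$, with a similar proof'' to Theorem~\ref{the1}, and your substitution of the degree-$(n+m)$ equation for the degree-$(B_m+1)$ one in the elimination chain is exactly that similar proof. Your caveats about survival of $\varkappa_n$ and irreducibility under specialization are the same soft points present in the paper's own argument for Theorem~\ref{the1}.
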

\subsection{Numerical semigroups $S_3$}\label{l41}
In this section we consider the most simple case of non-symmetric numerical 
semigroups $S_3$ generated by three integers. The numerator in the rational Rep 
(\ref{y3}) of its Hilbert series $H(S_3;z)$ reads,
\bea
Q(S_3;z)=1-\left(z^{x_1}+z^{x_2}+z^{x_3}\right)+z^{y_1}+z^{y_2},\qquad\beta_1=3,
\;\;\beta_2=2,\;\;g_3=3.\nonumber
\eea
Six polynomial equations (\ref{y6}) and (\ref{y22}) for five symmetric
polynomials, $X_k=\sum_{j=1}^3x_j^k$, $k=1,2,3$, and $Y_r=\sum_{j=1}^2y_j^r$, 
$r=1,2$, are given below,
\bea
\left.\begin{array}{lllll}
Y_1=X_1,&&Y_2=X_2+2\pi_3,&&Y_3=X_3+6\pi_3K_0,\\
Y_4=X_4+24\pi_3K_1,&&Y_5=X_5+60\pi_3K_2,&&Y_6=X_6+120\pi_3K_3,\end{array}
\right.\label{y36}
\eea
where $K_i$ are given in (\ref{y23}). Bearing in mind the Newton identities 
\cite{mc95} related symmetric polynomials,
\bea
&&Y_3=\frac1{2}\left(3Y_2-Y_1^2\right)Y_1,\qquad 
Y_4=\frac1{2}\left(Y_2^2+2Y_1^2Y_2-Y_1^4\right),\label{y37}\\
&&Y_5=\frac1{4}\left(5Y_2^2-Y_1^4\right)Y_1,\qquad
Y_6=\frac1{4}\left(Y_2^2+6Y_1^2Y_2-3Y_1^4\right)Y_2,\nonumber\\
&&X_4=\frac1{6}\left(X_1^4-6X_1^2X_2+8X_1X_3+3X_2^2\right),\qquad X_5=\frac1{6}
\left(X_1^5-5X_1^3X_2+5X_1^2X_3+5X_2X_3\right),\nonumber\\
&&X_6=\frac1{12}\left(X_1^6-3X_1^4X_2-9X_1^2X_2^2+3X_2^3+4X_1^3X_3+12X_1X_2
X_3+4X_3^2\right),\nonumber
\eea
we present six equations (\ref{y36}) as follows
\bea
\frac1{2}Y_1\left(3Y_2-Y_1^2\right)&\!\!=\!\!&X_3+6\pi_3K_0,\qquad 
Y_1=X_1,\qquad Y_2=X_2+2\pi_3,\label{y38}\\
3\left(Y_2^2+2Y_1^2Y_2-Y_1^4\right)&\!\!=\!\!&X_1^4-6X_1^2X_2+8X_1X_3+3X_2^2+
144\pi_3K_1,\nonumber\\
\frac{3}{2}Y_1\left(5Y_2^2-Y_1^4\right)&\!\!=\!\!&X_1^5-5X_1^3X_2+5X_1^2X_3+
5X_2X_3+360\pi_3K_2,\nonumber\\
3Y_2\left(Y_2^2+6Y_1^2Y_2-3Y_1^4\right)&\!\!=\!\!&X_1^6-3X_1^4X_2-9X_1^2X_2^2+
3X_2^3+4X_1^3X_3+12X_1X_2X_3+\nonumber\\
&&4X_3^2+1440\pi_3K_3.\nonumber
\eea
Substituting three first relations of (\ref{y38}) into the three last and
simplifying the final expressions, we obtain
\bea
&&Y_1^2-4K_0Y_1+12K_1+\pi_3=Y_2,\qquad Y_1^3-2K_0Y_1^2+4\pi_3K_0+24K_2=Y_2(Y_1+
2K_0),\nonumber\\
&&Y_1^4-2\pi_3Y_1^2+8\pi_3K_0Y_1+8\pi_3K_0^2-\frac{4}{3}\pi_3^2 +80K_3=Y_2^2-
2Y_2(\pi_3-4K_0Y_1).\label{y39}
\eea
Combining separately the 1st relation in (\ref{y39}) with the 2nd and 3rd
relations, we get, respectively,
\bea
\left(3K_1-2K_0^2+\frac{\pi_3}{4}\right)Y_1&=&6K_2-6K_0K_1+\frac{\pi_3}{2}
K_0,\label{y40}\\
\left(3K_1-2K_0^2+\frac{\pi_3}{4}\right)Y_1^2&=&10K_3-18K_1^2+\pi_3K_0^2-
\frac{\pi_3^2}{24},\nonumber
\eea
and further, due to (\ref{y36},\ref{y37},\ref{y39}),
\bea
X_2&=&4\left(\frac{3K_2-6K_0K_1+2K_0^3}{3K_1-2K_0^2+\frac{\pi_3}{4}}\right)^2-
4\left(K_0^2-3K_1\right)-\pi_3,\qquad X_1=Y_1,\nonumber\\
X_3&=&Y_1^3-6K_0Y_1^2+3\left(6K_1+\frac{\pi_3}{2}\right)Y_1-6K_0\pi_3,\qquad 
Y_2=X_2+2\pi_3.\nonumber
\eea
Rescaling $K_r=\varkappa_r\sqrt{\pi_3^{r+1}}$, write a necessary condition to 
have non-trivial solutions for equations (\ref{y40}),
\vspace{-.2cm}
$$
3\varkappa_1\ne 2\varkappa_0^2-\frac1{4},
$$
otherwise, there exist three equalities
\bea
\varkappa_1=\frac{2}{3}\varkappa_0^2-\frac1{12},\qquad\varkappa_2=\varkappa_0
\left(\varkappa_1-\frac1{12}\right),\qquad\varkappa_3=\frac{9}{5}\varkappa_1^2-
\frac1{10}\varkappa_0^2+\frac1{240},\label{y41}
\eea
which define a special class of semigroups $S_3$. In section \ref{l52}, we show 
that the last three formulas are related to symmetric 3-generated semigroups.

Combining two equalities (\ref{y40}), we obtain, in accordance with Theorem 
\ref{the1}, equation (\ref{y32}) in rescaled variables $\varkappa_0,\varkappa_1,
\varkappa_2,\varkappa_3$,
\bea
\left(10\varkappa_3-18\varkappa_1^2+\varkappa_0^2-\frac1{24}\right)\left(3
\varkappa_1-2\varkappa_0^2+\frac1{4}\right)=\left(6\varkappa_2-6\varkappa_0
\varkappa_1+\frac{\varkappa_0}{2}\right)^2,\label{y42}
\eea
that manifests three algebraically independent genera $G_0,G_1,G_2$.
\subsubsection{Extension on higher $\varkappa_n$ in $S_3$}\label{411}
To derive the equation (\ref{y35}) for $\varkappa_4$ let us consider the power
sums $X_7,Y_7$,
\bea
X_7\!=\!\frac{X_1^7\!-\!21X_1^3X_2^2+7X_1^4X_3+21X_2^2X_3+28X_1X_3^2}{36},\qquad
Y_7\!=\!\frac{(Y_1^6\!-\!7Y_1^4Y_2+7Y_1^2Y_2^2+7Y_2^3)Y_1}{8},\nonumber
\eea
and substitute them into equality $Y_7\!=\!X_7+840\pi_3K_4$. Making use of the
last equality and five first relations in (\ref{y38}) and performing necessary
calculations, we arrive at three equations for $Y_1$ and $Y_2$.
\bea
Y_1^2-4K_0Y_1+12K_1+\pi_3=Y_2,\qquad Y_1^3-2K_0Y_1^2+4\pi_3K_0+24K_2=
Y_2(Y_1+2K_0),\nonumber\\
8K_0^2\pi_3Y_1+Y_1(Y_1^2-Y_2)(Y_2-\pi_3)+K_0(Y_1^4-4\pi_3^2+4\pi_3Y_2-4Y_1^2Y_2
-Y_2^2)=-60K_4,\label{y43}
\eea
which is similar to (\ref{y39}) by exception of the last one. Equations 
(\ref{y43}) can be resolved in $Y_1$ as follows,
\bea
\left(3K_1-2K_0^2+\frac{\pi_3}{4}\right)Y_1&=&6K_2-6K_0K_1+\frac{\pi_3}{2}K_0,
\nonumber\\
\left(Y_1^2-2K_0Y_1+12K_1\right)\left(3K_1-2K_0^2+\frac{\pi_3}{4}\right)
Y_1&=&15K_4-K_0\left(\frac{\pi_3}{2}-6K_1\right)^2,\label{y44}
\eea
Combining two equations in (\ref{y44}) and rescaling the coefficients $K_r=
\varkappa_r\sqrt{\pi_3^{r+1}}$, we obtain, in accordance with Theorem 
\ref{the1}, equation (\ref{y35}) in $\varkappa_0,\varkappa_1,\varkappa_2,
\varkappa_4$ variables,
\bea
\left(15\varkappa_4-\varkappa_0\left(6\varkappa_1-\frac1{2}\right)^2\right)   
\left(3\varkappa_1-2\varkappa_0^2+\frac1{4}\right)^2=\left(6\varkappa_2-6
\varkappa_1\varkappa_0+\frac{\varkappa_0}{2}\right){\mathcal K}_4(\varkappa_0,
\varkappa_1,\varkappa_2),\;\;\label{y45}
\eea
where ${\mathcal K}_4(\varkappa_0,\varkappa_1,\varkappa_2)$ is a positive 
definite function
\bea
{\mathcal K}_4=2\left(1\!+\!12\varkappa_1\right)\varkappa_0^4+24\varkappa_2
\varkappa_0^3-18\varkappa_1\left(1\!+\!4\varkappa_1\right)\varkappa_0^2+3
\varkappa_2\left(1\!-\!36\varkappa_1\right)\varkappa_0+36\varkappa_2^2+\frac{3
\varkappa_1}{4}\left(1\!+\!12\varkappa_1\right)^2\nonumber
\eea
in the positive octant $\varkappa_0,\varkappa_1,\varkappa_2>0$. In order to
prove that, we suppose, by way of contradiction, that ${\mathcal K}_4(\varkappa
_0,\varkappa_1,\varkappa_2)=0$. The last equation may be resolved as quadratic
in $\varkappa_2$,
\bea
\varkappa_2^{\pm}=\pm\left(1-8\varkappa_0^2+12\varkappa_1\right)\sqrt{\varkappa
_0^2-12\varkappa_1}+\varkappa_0\left(36\varkappa_1-1-8\varkappa_0^2\right),
\quad\varkappa_2^-\le\varkappa_2^+.\label{y46}
\eea
Consider the largest real root $\varkappa_2^+\ge \varkappa_2^-$ and require 
$12\varkappa_1\le\varkappa_0^2$. Combining (\ref{y46}) with the last inequality,
we arrive for $\varkappa_0,\varkappa_1>0$ at the upper bound,
\bea
\varkappa_2^+\le\left(1-7\varkappa_0^2\right)\sqrt{\varkappa_0^2-12\varkappa_1}
-\varkappa_0\left(1+5\varkappa_0^2\right)\le\varkappa_0\left(1-7\varkappa_0^2-
1-5\varkappa_0^2\right)\le -12\varkappa_0^3.\nonumber
\eea
Thus, the both roots $\varkappa_2^\pm$ are never positive. In other words, in
the positive octant $\varkappa_0,\varkappa_1,\varkappa_2>0$ the function
${\mathcal K}_4(\varkappa_0,\varkappa_1,\varkappa_2)$ is never vanished. Since  
${\mathcal K}_4(\varkappa_0,0,0)=2\varkappa_0^4$, we conclude that the function
${\mathcal K}_4(\varkappa_0,\varkappa_1,\varkappa_2)$ is always positive.

Coming back to relation (\ref{y44}) and equations (\ref{y45}), we conclude that
there exists one special case, when all (\ref{y44},\ref{y45}) are satisfied
identically,
\bea
3\varkappa_1-2\varkappa_0^2+\frac1{4}=0,\qquad 6\varkappa_2-6\varkappa_1
\varkappa_0+\frac{\varkappa_0}{2}=0,\qquad 15\varkappa_4-\varkappa_0\left(
6\varkappa_1-\frac1{2}\right)^2=0,\label{y47}
\eea
This case is related to symmetric 3-generated semigroups and equalities  
(\ref{y47}) are coincided with the three corresponding formulas for $K_1,K_2,
K_4$ in (\ref{y67}).
\section{Supplementary relations for $K_r$ and $G_r$ in symmetric semigroups}
\label{l5}
If a numerical semigroup $S_m$ is symmetric, then degrees $C_{k,j}$ of syzygies 
and Betti's numbers $\beta_k$ in the rational Rep (\ref{y2}) of the Hilbert 
series are related as follows,
\bea
\beta_k=\beta_{m-k-1},\quad\beta_{m-1}=1,\qquad C_{k,j}+C_{m-k-1,j}=Q_m,
\label{y48}
\eea
while the number of gaps and non-gaps of $S_m$ are equal to $G_0$. Therefore,
according to (\ref{y4},\ref{y23}), we have 
\bea
F_m=2G_0-1,\quad 2K_0=Q_m=F_m+\sigma_1,\qquad \sum_{j=1}^{G_0}s_j^n+\sum_{j=1}
^{G_0}(F_m-s_j)^n=\sum_{j=0}^{F_m}j^n.\label{y49}
\eea
The last identity in (\ref{y49}) may be represented as follows
\bea
2G_{2r}+\sum_{q=1}^{2r}(-1)^q{2r\choose q}F_m^qG_{2r-q}=\frac{F_m^{2r+1}}{2r+1}+
\frac{F_m^{2r}}{2}+\sum_{q=1}^{2r-1}{2r\choose q-1}{\cal B}_{2r-q+1}\frac{F_m^q}
{q},\label{y50}
\eea
where ${\cal B}_k$ denotes the Bernoulli number. Equality (\ref{y50}) reduces 
the number of independent genera $G_k$ twice, making $G_{2r}$ dependent on 
genera with odd indices, $G_{2j-1}$, $j=1,\ldots,r$,
\bea
\frac{G_{2r}}{F_m}=\frac1{2}\sum_{p=0}^{2r-2}\left[{2r\choose p}\frac{{\cal B}_
{2r-p}}{p+1}+(-1)^p{2r\choose p+1}G_{2r-p-1}\right]F_m^p-\frac{2r-1}{2r+1}\frac{
F_m^{2r}}{4},\label{y51}
\eea
e.g.,
\bea
\frac{G_2}{F_m}&=&G_1-\frac{F_m^2-1}{12},\hspace{4cm}\frac{F_m^2-1}{12}=
\frac{G_0(G_0-1)}{3},\label{y52}\\
\frac{G_4}{F_m}&=&2G_3-F_m^2G_1+\frac{F_m^2-1}{12}\;\frac{6F_m^2+1}{5},
\nonumber\\
\frac{G_6}{F_m}&=&3G_5-5F_m^2G_3+3F_m^4G_1-\frac{F_m^2-1}{12}\;\frac{51F_m^4+
9F_m^2+2}{14},\nonumber\\
\frac{G_8}{F_m}&=&4G_7-14F_m^2G_5+28F_m^4G_3-17F_m^6G_1+\frac{F_m^2-1}{12}\;
\frac{310F_m^6+55F_m^4+13F_m^2+3}{15}.\nonumber
\eea

To find a number $g_m$ of algebraically independent genera of symmetric 
semigroups, we apply Theorem \ref{the1} with a new number $\zeta_m$ of 
independent variables, which differs from (\ref{y31}). For this purpose, 
represent formula (\ref{y3}) for the numerator $Q\left(S_m;t\right)$ when $m=0,
1(\bmod\;2)$ separately and account for independent variables $x_j,y_j,\ldots,
z_j,Q_m$ in every of two cases,
\begin{itemize}
\item $m=2q,\;\;q\ge 2$,\hspace{.7cm} $\zeta_{2q}=\sum_{j=0}^{q-1}\beta_j$
\bea
1\!-\!t^{Q_m}\!-\sum_{j=1}^{\beta_1}\left(t^{x_j}-t^{Q_m-x_j}\right)+\sum_{j=1}^
{\beta_2}\left(t^{y_j}-t^{Q_m-y_j}\right)-\!\ldots\!-(-1)^q\sum_{j=1}^{\beta_
{q-1}}\left(t^{z_j}-t^{Q_m-z_j}\right),\quad\label{y53}
\eea
\item $m=2q+1\;\;q\ge 2$,\hspace{.7cm}$\zeta_{2q+1}=\sum_{j=0}^{q}\beta_j$
\hspace{.7cm} $(-1)^q\beta_q=\sum_{j=0}^{q-1}(-1)^{j+1}\beta_j$
\bea
1\!+\!t^{Q_m}\!-\sum_{j=1}^{\beta_1}\left(t^{x_j}+t^{Q_m-x_j}\right)+\sum_{j=1}
^{\beta_2}\left(t^{y_j}+t^{Q_m-y_j}\right)-\!\ldots\!+(-1)^q\sum_{j=1}^{\beta
_q}\left(t^{z_j}+t^{Q_m-z_j}\right).\quad\label{y54}
\eea
\end{itemize}
Note, that $\zeta_{2q+1}=0\;(\bmod\;2)$ since $\zeta_{2q+1}$ may be presented
for $m=1,3(\bmod\;4)$ as follows
\bea
\zeta_{4q+1}=2\sum_{j=1}^q\beta_{2j-1},\qquad 
\zeta_{4q+3}=2\left(\sum_{j=1}^q\beta_{2j}+1\right).\label{y55}
\eea
\begin{theorem}\label{the3}
Let $S_m$ be a symmetric (not CI) semigroup, then there are $g_m$ independent 
genera
\bea
g_{2q}=\zeta_{2q}-q,&& G_0,G_1,G_3,\ldots G_{2g_{2q}-3},\label{y56}\\
g_{2q+1}=\zeta_{2q+1}-q,&& G_0,G_1,G_3,\ldots G_{2g_{2q+1}-3}.\label{y57}
\eea
\end{theorem}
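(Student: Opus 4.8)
The plan is to re-run the resultant elimination of Theorem \ref{the1}, adapting it to the two structural consequences of symmetry encoded in (\ref{y48}): the pairing $C_{k,j}+C_{m-1-k,j}=Q_m$ reduces the number of genuinely independent syzygy variables, and that same pairing will make part of the genus-free identities (\ref{y6}) redundant. First I would substitute $C_{m-1-k,j}=Q_m-C_{k,j}$ into the numerator $Q(S_m;z)$, treating $m=2q$ and $m=2q+1$ separately, so as to land on the reduced representations (\ref{y53}) and (\ref{y54}). These exhibit $\zeta_m$ independent variables $x_j,y_j,\ldots,z_j$ together with $Q_m$, in place of the $B_m$ variables used for non-symmetric $S_m$. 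The not-CI hypothesis enters only through the actual Betti numbers fixing $\zeta_m$; the pairing (\ref{y48}) itself holds for every symmetric $S_m$. In the odd case I would carry along the self-paired central row under $C\mapsto Q_m-C$, together with its Betti constraint $(-1)^q\beta_q=\sum_{j=0}^{q-1}(-1)^{j+1}\beta_j$ from beneath (\ref{y54}) and the parity of $\zeta_{2q+1}$ recorded in (\ref{y55}).

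Next I would feed the pairing into the alternating power sums ${\mathbb C}_k(S_m)$ of (\ref{y5}). Writing each ${\mathbb C}_k$ as a signed sum of terms $C^k\mp(Q_m-C)^k$ over the paired syzygies, the top-degree terms cancel for exactly one parity of $k$: the even $k$ when $m=2q$ (paired rows carry opposite signs) and the odd $k$ when $m=2q+1$ (paired rows carry equal signs). The corresponding ${\mathbb C}_k$ then drop in degree and become algebraic consequences of the lower identities, so that among the $m-1$ genus-free relations ${\mathbb C}_1=0,\ldots,{\mathbb C}_{m-2}=0,\ {\mathbb C}_{m-1}=(-1)^m(m-1)!\pi_m$ only $q$ remain independent on the reduced variety. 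I would verify this collapse on the prototype $m=4$, where a direct computation gives ${\mathbb C}_1=2(\sum_j x_j-Q_m)$ and ${\mathbb C}_2=2Q_m(\sum_j x_j-Q_m)$, so that ${\mathbb C}_2=0$ is forced by ${\mathbb C}_1=0$ and precisely ${\mathbb C}_1,{\mathbb C}_3$ survive, i.e.\ $q=2$ constraints.

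The count then follows from a dimension argument. The $q$ surviving genus-free identities cut the $\zeta_m$-dimensional space of reduced variables down to a variety of dimension $\zeta_m-q$, and every genus coefficient $\varkappa_{k-m}$ is, through $L_k$, the value of the function ${\mathbb C}_k$ on this variety; hence the transcendence degree of the field they generate is at most $\zeta_m-q$. Running the successive resultants ${\mathcal Res}_k$ exactly as in the proof of Theorem \ref{the1}, now in the reduced variables with the redundant identities discarded, produces an explicit irreducible relation among precisely $g_m+1$ genus coefficients, giving $g_m=\zeta_m-q$ in agreement with (\ref{y56}) and (\ref{y57}). To determine \emph{which} genera are independent, I would invoke the gap-symmetry identity (\ref{y50}): it expresses every even genus $G_{2r}$ polynomially through $F_m=2G_0-1$ and the odd genera $G_1,G_3,\ldots,G_{2r-1}$, so no even genus can belong to an independent set. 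The surviving algebraically independent genera are therefore $\{G_0,G_1,G_3,\ldots,G_{2g_m-3}\}$, and irreducibility of the terminal resultant upgrades the dimension bound to genuine independence, just as in Corollary \ref{cor1}.

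The hard part will be the redundancy count, namely proving rigorously and uniformly in $m\bmod 2$ that symmetry collapses exactly $q-1$ of the genus-free identities when $m=2q$ and exactly $q$ when $m=2q+1$, so that $q$ always remain. The odd case is the delicate one: the self-paired central row and the isolated terms $1$ and $t^{Q_m}$ enter the ${\mathbb C}_k$ with signs opposite to the even case, and I would have to check that, together with the Betti constraint and the parity statement in (\ref{y55}), they neither change the tally $q$ nor obstruct the successive resultant steps. A secondary point to confirm is that the terminal resultant genuinely involves $G_{2g_m-3}$ (rather than degenerating), which is what turns ``at most $g_m$'' into ``exactly $g_m$.''
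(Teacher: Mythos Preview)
Your route differs from the paper's. The paper does not reduce to $\zeta_m$ variables and then argue that only $q$ of the $m-1$ genus-free identities survive. Instead it keeps all $2\zeta_m-1$ syzygy degrees (the paired ones counted separately), plugs this value in for $B_m$ in the formula $g_m=B_m-m+1$ from Theorem~\ref{the1} to obtain an intermediate count $\widetilde g_m$, and only \emph{then} invokes the gap relations (\ref{y51}) to halve $\widetilde g_m$ down to $\zeta_m-q$. In other words, the paper lets symmetry act at the level of the genera via (\ref{y51}), whereas you let symmetry act at the level of the syzygy variables and the constraints ${\mathbb C}_k$. Your decomposition is more structural and explains \emph{why} the halving in the paper works, but it trades the paper's one-line counting for the redundancy claim you flag as ``the hard part'': that, after the substitution $C_{m-1-k,j}=Q_m-C_{k,j}$, exactly $m-1-q$ of the identities ${\mathbb C}_1,\ldots,{\mathbb C}_{m-1}$ become algebraic consequences of the remaining $q$. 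You verify this for $m=4$ but do not prove it in general, and your degree-drop observation (leading terms cancel for one parity of $k$) is suggestive but not yet a proof of redundancy---a polynomial of lower degree in the $C$'s need not lie in the ideal generated by the lower ${\mathbb C}_k$. This is a genuine gap, though one you already identify. A small slip in your $m=4$ check: with $\beta_1=5$ one gets ${\mathbb C}_1=2\sum_j x_j-4Q_4$ and ${\mathbb C}_2=2Q_4\sum_j x_j-4Q_4^2=Q_4\,{\mathbb C}_1$, so the factor inside the parentheses should be $\sum_j x_j-2Q_4$, not $\sum_j x_j-Q_4$; your conclusion ${\mathbb C}_2=Q_4{\mathbb C}_1$ is nonetheless correct and matches (\ref{y59}).
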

\begin{proof}
First, consider symmetric semigroups $S_{2q}$ with even {\em edim}. The total 
number of syzygies degrees (including those which are related in couples), 
appeared in (\ref{y53}), is given by $2\zeta_{2q}-1$. Replacing by this number 
the total Betti number $B_m$ in (\ref{y32}), we get ${\widetilde g}_{2q}=2
\zeta_{2q}-1-(2q-1)=2(\zeta_{2q}-q)$, which does not related to equalities  
(\ref{y51}). Keeping in mind the supplementary relations (\ref{y51}) for genera 
$G_k$, we have to decrease the last number twice, i.e., we arrive at 
(\ref{y56}).

Next, consider symmetric semigroups $S_{2q+1}$ with odd {\em edim}. By similar 
considerations, as in the case $m=2q$, we arrive at ${\widetilde g}_{2q+1}=2
\zeta_{2q+1}-1-(2q+1-1)=2(\zeta_{2q+1}-q)-1$, which does not related to 
equalities (\ref{y51}). Keeping in mind the supplementary relations (\ref{y51}) 
for genera $G_k$, we have to decrease the number ${\widetilde g}_{2q+1}$ as 
follows: $g_{2q+1}=(1+{\widetilde g}_{2q+1})/2$, i.e., we arrive at (\ref{y57}).
$\;\;\;\;\;\;\Box$
\end{proof}
Combining Theorem \ref{the3} with (\ref{y53},\ref{y55}) we may specify the 
number $g_m$ in more details,
\bea
g_{2q}=\sum_{j=1}^{q-1}\beta_j-q+1,\qquad
g_{4q+1}=2\left(\sum_{j=1}^q\beta_{2j-1}-q\right),\qquad
g_{4q+3}=2\left(\sum_{j=1}^q\beta_{2j}-q\right)+1.\label{y58}
\eea
\subsection{Symmetric (not CI) numerical semigroups $S_4$}\label{l51}
The numerator (\ref{y3}) in the rational Rep of its Hilbert series $H(S_4;z)$ 
reads \cite{be752},
\bea
Q(S_4;z)=1-\sum_{j=1}^5z^{x_j}+\sum_{j=1}^5z^{Q_4-x_j}-z^{Q_4},\qquad\beta_1=5,
\quad g_4=4.\nonumber
\eea
We present polynomial equations (\ref{y6},\ref{y22}) for the ten first symmetric
polynomials, $X_k=\!\sum_{j=1}^5x_j^k$. Among them, equations of the 1st 
and 2nd degrees are coincided. Together with (\ref{y49}) they give
\bea
X_1=2Q_4=4K_0.\label{y59}
\eea
The rest of eight equations might be decomposed in couples of the odd and even 
degrees,
\bea
2X_3-3Q_4X_2+2Q_4^3\!\!\!&=&\!\!\!3!\pi_4,\nonumber\\
2X_3-3Q_4X_2+2Q_4^3\!\!\!&=&\!\!\!\frac{4!}{0!}\frac{K_0\pi_4}{2Q_4}
,\nonumber\\
2X_5-5Q_4X_4+10Q_4^2X_3-10Q_4^3X_2+6Q_4^5\!\!\!&=&\!\!\!
\frac{5!}{1!}K_1\pi_4,\nonumber\\
2X_5-5Q_4X_4+\frac{20}{3}Q_4^2X_3-5Q_4^3X_2+\frac{8}{3}Q_4^5\!\!\!&=&\!\!\!
\frac{6!}{2!}\frac{K_2\pi_4}{3Q_4},\nonumber\\
2X_7-7Q_4X_6+21Q_4^2X_5-35Q_4^3X_4+35Q_4^4X_3-21Q_4^5X_2+10Q_4^7\!\!\!&=&\!\!\!
\frac{7!}{3!}K_3\pi_4,\nonumber\\
2X_7-7Q_4X_6+14Q_4^2X_5-\frac{35}{2}Q_4^3X_4+14Q_4^4X_3-7Q_4^5X_2+3Q_4^7\!\!\!&
=&\!\!\!\frac{8!}{4!}\frac{K_4\pi_4}{4Q_4},\nonumber
\eea
\bea
2X_9\!-\!9Q_4X_8\!+\!36Q_4^2X_7\!-\!84Q_4^3X_6\!+\!126Q_4^4X_5\!-\!126Q_4^5X_4\!
+\!84Q_4^6X_3\!-\!36Q_4^7X_2\!+\!14Q_4^9\!\!\!&=&\!\!\!\frac{9!}{5!}K_5\pi_4,
\nonumber\\
2X_9\!-\!9Q_4X_8\!+\!24Q_4^2X_7\!-\!42Q_4^3X_6\!+\!\frac{252}{5}Q_4^4X_5\!-\!
42Q_4^5X_4\!+\!24Q_4^6X_3\!-\!9Q_4^7X_2\!+\!\frac{16}{5}Q_4^9\!\!\!&=&\!\!\!\!
\frac{10!}{6!}\frac{K_6\pi_4}{5Q_4}\nonumber
\eea
Their successive solution gives,
\bea
&&K_2=2\left(K_1-\frac1{3}K_0^2\right)K_0,\qquad K_4=4\left(K_3-2K_0^2K_1+
\frac{4}{5}K_0^4\right)K_0,\nonumber\\
&&K_6=6\left(K_5-\frac{20}{3}K_0^2K_3+16K_0^4K_1-\frac{136}{21}K_0^6\right)K_0,
\label{y60}
\eea
where $K_0,K_1,K_3,K_5$ are four independent coefficients, in accordance with 
(\ref{y58}). Formulas (\ref{y60}) and (\ref{y23},\ref{y52}) are strongly 
related. Namely, the former may be obtained by a straightforward substitution 
of (\ref{y52}) into (\ref{y23}).

The list of formulas (\ref{y60}) may be continued if we consider equations 
(\ref{y22}) for higher degrees,
\bea
K_{2r}\!=\!2r\left(K_{2r-1}\!-\!\rho_{2r}^1K_0^2K_{2r-3}\!+\!\ldots\!-\!(-1)^r
\rho_{2r}^{r-1}K_0^{2(r-1)}K_1\!+\!(-1)^r\rho_{2r}^rK_0^{2r}\right)K_0,\quad
\rho_{2r}^j\in{\mathbb Q},\nonumber
\eea
where $K_{2j+1}$, $j\ge 3$, are algebraic (not polynomial) functions of 
$K_0,K_1,K_3,K_5$.
\subsection{Supplementary relations for $K_r$ and $G_r$ in symmetric CI 
semigroups}\label{l52}
This kind of numerical semigroups is described by a simple Hilbert series 
(\ref{y2}) with a numerator $Q\left(S_m;z\right)$
\bea
Q\left(S_m;z\right)=\left(1-z^{e_1}\right)\left(1-z^{e_2}\right)\cdots
\left(1-z^{e_{m-1}}\right),\nonumber
\eea
built on $m-1$ degrees $e_j$. The alternating power sum ${\mathbb C}_k(S_m)$ 
reads,
\bea
{\mathbb C}_n(S_m)=\sum_{j=1}^{m-1}e_j^n-\sum_{j>r=1}^{m-1}(e_j+e_r)^n+\ldots-
(-1)^{m-1}\left(\sum_{j=1}^{m-1}e_j\right)^n.\label{y61}
\eea
Then, according to the Rep (\ref{y24},\ref{y25}), the expression in (\ref{y61}) 
may be written as follows,
\bea
{\mathbb C}_n(S_m)=\frac{(-1)^m\;n!}{(n-m+1)!}\varepsilon_{m-1}T_{n-m+1}(E),
\qquad{\mathbb C}_{m-1}(S_m)=(-1)^m(m-1)\:!\:\varepsilon_{m-1},\label{y62}
\eea
where polynomials $T_r(E)=T_r(E_1,\ldots,E_r)$ are built by replacing $X_r\to 
E_r=\sum_{j=1}^{m-1}e_j^r$ in polynomials $T_r(X_1,\ldots,X_r)$, defined in 
(\ref{y25}), and $\varepsilon_{m-1}=\prod_{j=1}^{m-1}e_j$.

Combining formulas (\ref{y22}) and (\ref{y62}), we obtain,
\bea
\pi_m=\varepsilon_{m-1},\qquad K_p=\frac{T_{p+1}(E)}{p+1},\qquad p\ge 0,
\label{y63}
\eea
where the 1st equality was established earlier (see formula (5.4) in 
\cite{fl17}) while the 2nd relation leads to an infinite number of equalities. 
An universality of (\ref{y63}) disappears if we consider symmetric (not CI) 
semigroups, see e.g., formula (\ref{y59}) for $K_0$. The number $g_m$ of 
independent genera $G_r$ is given by the number of degrees of syzygies, bearing 
in mind the 1st relation in (\ref{y63}),
\vspace{-.1cm}
\bea
g_m=m-2\label{y64}
\eea
In symmetric semigroups $S_m$ there holds a strict inequality $\mu>m$ (see 
\cite{fl10}), that bounds a genus from below, $G_0\ge m+1$, and leads, in 
combination with (\ref{y64}), to another inequality $g_m<G_0$.

Below we present three examples with symmetric CI semigroups $S_m$, where $m=2,
3,4$.
\begin{example}\label{ex1}CI semigroup $S_2$, $g_2=0$, $e=\pi_2$,\hspace{1cm}
$E_r=\pi_2^r$,

By formula (17) from \cite{fl20} and (\ref{y63}) we obtain
\bea
T_r(e)=\frac{\pi_2^r}{r+1},\qquad K_r=\frac{\pi_2^{r+1}}{(r+1)(r+2)}.
\label{y65}
\eea
Assuming that Conjecture \ref{con1} is true, substitute the last into 
(\ref{y27}),
\bea
(p+1)\sum_{r=0}^p{p\choose r}T_{p-r}(\sigma)G_r=\frac{\pi_2^{p+1}}{p+2}-
2^{p+1}T_{p+1}(\delta),\nonumber
\eea
which may be resolved with respect to $G_r$ if we make use of the inverse matrix
$\left({p\choose r}T_{p-r}(\sigma)\right)^{-1}$. We give explicit expressions 
for the five first genera $G_r$,
\bea
G_0&=&\frac{\pi_2-\sigma_1+1}{2},\qquad 
G_1=\frac{\pi_2-\sigma_1+1}{2}\;\frac{2\pi_2-\sigma_1-1}{6},\label{y66}\\
G_2&=&\frac{\pi_2-\sigma_1+1}{2}\;\frac{\pi_2(\pi_2-\sigma_1)}{6},\nonumber\\
G_3&=&\frac{\pi_2-\sigma_1+1}{2}\;\frac{6\pi_2^3+\pi_2^2(4-9\sigma_1)+\pi_2
(\sigma_1^2-2\sigma_1-1)+(\sigma_1+1)(\sigma_1^2+1)}{60},\nonumber\\
G_4&=&\frac{\pi_2-\sigma_1+1}{2}\;\frac{\pi_2(\pi_2-\sigma_1)}{6}\;
\frac{2\pi_2^2-\pi_2(2\sigma_1-3)-\sigma_1^2}{5}.\nonumber
\eea
Formulas (\ref{y66}) coincide with expressions for genera, derived \cite{ro94} 
in terms of generators $d_1,d_2$, e.g., 
\bea
G_0=\frac{(d_1-1)(d_2-1)}{2},\quad
G_1=\frac{(d_1-1)(d_2-1)(2d_1d_2-d_1-d_2-1)}{12}.\nonumber
\eea
\end{example}

\begin{example}\label{ex2}CI semigroup $S_3$, $g_3=1$,\hspace{1cm}$E_1=2K_0$.

There exists one independent power sum $E_1$, while the sums $E_{2r}$ may be 
expressed as follows,
\bea
E_2=E_1^2-2\pi_3,\qquad E_4=E_1^4-4\pi_3E_1^2+2\pi_3^2,\qquad E_6=E_1^6-
6\pi_3E_1^4+9\pi_3^2E_1^2-2\pi_3^3.\nonumber
\eea
Substituting $E_{2k}$ into expressions for $T_r(E)$ in Appendix \ref{appendix3} 
and subsequently into (\ref{y63}) we obtain
\bea
K_1&=&\frac1{3}\left(2K_0^2-\frac{\pi_3}{4}\right),\label{y67}\\
K_2&=&\frac1{3}\left(2K_0^2-\frac{\pi_3}{2}\right)K_0,\nonumber\\
K_3&=&\frac1{5}\left(4K_0^4-\frac{3\pi_3}{2}K_0^2+\frac{\pi_3^2}{12}\right),
\nonumber\\
K_4&=&\frac{4}{15}\left(4K_0^4-2\pi_3K_0^2+\frac{\pi_3^2}{4}\right)K_0,\nonumber\\
K_5&=&\frac1{21}\left(32K_0^6-20\pi_3K_0^4+4\pi_3^2K_0^2-\frac{\pi_3^3}{8}
\right),\nonumber\\
K_6&=&\frac1{7}\left(16K_0^6-12\pi_3K_0^4+\frac{10\pi_3^2}{3}K_0^2-\frac{\pi_3
^3}{4}\right)K_0.\nonumber
\eea
Combining (\ref{y67}) with formulas (\ref{y23}) and (\ref{y52}), we obtain for 
genera $G_r$ the polynomial expressions in $G_0$. We present here only the four 
first formulas; expressions for $G_5,G_6$ are extremely lengthy.
\bea
G_1&=&\frac{2}{3}G_0^2+\left(\frac{\delta_1}{3}-\frac1{2}\right)G_0+\frac{
\gamma}{6},\hspace{2cm}\gamma=\delta_1^2-\delta_2-\frac{\pi_3}{2},\label{y68}\\
G_2&=&\frac{2}{3}G_0^3+\frac{2}{3}\left(\delta_1-1\right)G_0^2+\frac1{3}\left(
\gamma-\delta_1+\frac1{2}\right)G_0-\frac{\gamma}{6},\nonumber\\
G_3&=&\frac{4}{5}G_0^4+\left(\frac{6\delta_1}{5}-1\right)G_0^3+\left(\frac{2
\gamma}{3}+\frac{2\delta_1^2}{15}+\frac{\pi_3}{30}-\delta_1+\frac1{3}\right)
G_0^2+\nonumber\\
&&\left[\gamma\left(\frac{\delta_1}{5}-\frac1{2}\right)-\frac{\delta_1}{3}
\left(\frac{2\delta_2}{5}-\frac1{2}\right)\right]G_0+\frac{\delta_1^2(\delta_1^2
-\pi_3)}{20}-\frac{\delta_2^2+2\gamma\delta_2-\gamma}{12}+\frac{\delta_4}{30}+
\frac{\pi_3^2}{60}\nonumber\\
G_4&=&\frac{16}{15}G_0^5+\frac{8}{5}\left(\frac{4\delta_1}{3}-1\right)G_0^4+
2\left(\frac1{3}-\frac{6\delta_1}{5}+\frac{4\delta_1^2+\pi_3}{15}+\frac{2\gamma}
{3}\right)G_0^3+\nonumber\\
&&\left(\frac{2\delta_1}{3}+4\gamma\left(\frac{\delta_1}{5}-\frac1{3}\right)-
\frac{4\delta_1^2+8\delta_1\delta_2+\pi_3}{15}\right)G_0^2+
\left(\frac{3\delta_1^4+4\delta_1\delta_2-5\delta_2^2+2\delta_4}{15}-\right.
\nonumber\\
&&\left.\gamma\frac{6\delta_1+10\delta_2-5}{15}+\frac{\pi_3^2}{15}-\frac1{30}
\right)G_0+\frac{5\delta_2^2-3\delta_1^4-2\delta_4}{15}+\frac{\gamma\delta_2}
{3}+\frac{\pi_3}{10}\left(\delta_1^2-\frac{\pi_3}{3}\right).\nonumber
\eea
In \cite{fr07}, formulas for $G_r$ were derived in terms of 3 diagonal elements 
of matrix of minimal relations for generators $d_j$, that makes them less 
convenient from computational point of view than formulas (\ref{y67},\ref{y68}).
\end{example}
\begin{example}\label{ex3}CI semigroup $S_4$, $g_4=2$,\hspace{1cm}$E_1=2K_0$,
\hspace{.5cm}$E_2=12(2K_1-K_0^2)$.

There exist two independent power sums $E_1,E_2$, while the rest $E_k$ may be
expressed as follows,
\bea
&&2E_3=3E_1E_2-E_1^3+6\pi_4,\nonumber\\
&&2E_4=E_2^2+2E_1^2E_2-E_1^4+8\pi_4E_1,\nonumber\\
&&4E_5=5E_1E_2^2-E_1^5+10\pi_4(E_1^2+E_2),\nonumber\\
&&4E_6=6E_1^2E_2^2+E_2^3-3E_1^4E_2+24\pi_4E_1E_2+12\pi_4^2.\nonumber
\eea
Substituting $E_k$ into expressions for $T_r(E)$ in Appendix \ref{appendix3} 
and subsequently into (\ref{y63}) we obtain
\bea
K_2&=&2\left(K_1-\frac1{3}K_0^2\right)K_0,\label{y69}\\
K_3&=&\frac1{5}\left(12K_1^2+2K_0^2K_1-\frac{8}{3}K_0^4-\frac{\pi_4}{12}K_0
\right),\nonumber\\
K_4&=&\frac1{5}\left(48K_1^2-32K_0^2K_1+\frac{16}{3}K_0^4-\frac{\pi_4}{3}K_0
\right)K_0,\nonumber\\
K_5&=&\frac1{7}\left(72K_1^3+48K_1^2K_0^2-80K_1K_0^4+\frac{64}{3}K_0^6-
\pi_4K_1K_0-\frac{2\pi_4}{3}K_0^3+\frac{\pi_4^2}{72}\right),\nonumber\\
K_6&=&\frac1{7}\left(432K_1^3-384K_1^2K_0^2+80K_1K_0^4+\frac{16}{3}K_0^6-
6\pi_4K_1K_0+\frac{2\pi_4}{3}K_0^3+\frac{\pi_4^2}{12}\right)K_0.\nonumber
\eea
By comparison (\ref{y60}) and (\ref{y69}), formulas for $K_4$ and $K_6$ in 
(\ref{y69}) may be obtained if we substitute $K_3$ and $K_5$ in (\ref{y69}) into
$K_4$ and $K_6$ in (\ref{y60}). Combining (\ref{y69}) with formulas (\ref{y23}),
we obtain for genera $G_r$ the polynomial expressions in $G_0,G_1$, e.g.,
\bea
G_2&=&\frac{2G_0-1}{3}\left(3G_1-G_0^2+G_0\right),\label{y70}\\
G_3&=&\frac{12G_1^2}{5}+\frac{G_1}{20}\left(10+8G_0^2+2\sigma_1-\sigma_1^2-
4G_0(2+\sigma_1)-\sigma_2\right)-\frac{8G_0^4}{15}+\frac{2G_0^3}{15}+\nonumber\\
&&\frac{G_0^2}{60}\left(\sigma_2+3\sigma_1^2-12\sigma_1-46\right)-\frac{G_0}
{120}\left(\sigma_1(\sigma_2-\sigma_1^2+6\sigma_1-10)+2\pi_4+2\sigma_2-28
\right)+\nonumber\\
&&\frac1{240}\left(\sigma_1(\sigma_2-\sigma_1^2+2\sigma_1-2)+2\pi_4\right).
\nonumber
\eea
\end{example}
\section{Concluding remarks}\label{l6}
We study polynomial identities of arbitrary degree $n$ for syzygies degrees of 
numerical semigroups $S_m$ and show in (\ref{y22}) that for $n\ge m$ they 
contain higher genera $G_r\!=\!\sum_{s\in{\mathbb Z}_>\!\!\setminus S_m}s^r$ of 
$S_m$,
\bea
\sum_{j=1}^{\beta_1}C_{1,j}^n-\sum_{j=1}^{\beta_2}C_{2,j}^n+\ldots-(-1)^{m-1}
\sum_{j=1}^{\beta_{m-1}}C_{m-1,j}^n=\frac{(-1)^mn\:!}{(n-m)\:!}\;\pi_mK_{n-m}(
G_0,\ldots,G_p),\nonumber
\eea
where a coefficient $K_p(G_0,\ldots,G_p)$ is a linear combination of genera. We 
calculate explicitly several first expressions (\ref{y23}) for $K_p$, $0\le p
\le 6$, and put forward Conjecture \ref{con1} related $K_p$ and $G_0,\ldots,
G_p$ for any integer $p\ge 0$. In symbolic calculus \cite{rm78}, this 
relationship (\ref{y27}) reads
\bea
K_p=\left(T(\sigma)+G\right)^p+\frac{2^{p+1}}{p+1}T_{p+1}(\delta),\nonumber
\eea
where after binomial expansion the symbolic powers $T^{p-r}(\sigma)G^r$ are 
converted into $T_{p-r}(\sigma)G_r$. Symmetric polynomials $T_r(\sigma)=T_r(
\sigma_1,\ldots,\sigma_r)$ and $T_r(\delta)=T_r(\delta_1,\ldots,\delta_r)$ are 
arisen in the theory of {\em symmetric CI} semigroups \cite{fl17,fl20}
\bea
\sum_{j=1}^{m-1}e_j^n-\sum_{j>r=1}^{m-1}(e_j+e_r)^n+\ldots-(-1)^{m-1}\left(
\sum_{j=1}^{m-1}e_j\right)^n=\frac{(-1)^m\;n!}{(n-m+1)!}\varepsilon_{m-1}
T_{n-m+1}(E),\nonumber
\eea
where polynomials $T_r(E)\!=\!T_r(E_1,\ldots,E_r)$ are related (see \cite{fl20},
formula (19)) to the polynomial part of the partition function, that gives a 
number of partitions of $s\ge 0$ into $m-1$ positive integers. Thus, in the 
relationship (\ref{y23},\ref{y27}), there coexist genera $G_r$ of {\em 
non-symmetric} semigroup $S_m$ and characteristic polynomials $T_r(\sigma)$ 
associated with semigroup generators $d_1,\ldots,d_m$.

Based on a finite number of syzygies degrees and homogeneity of the $m-2$ first 
polynomial identities (\ref{y6}), we find a number $g_m$ of algebraically
independent coefficients $K_p$ for different kinds of semigroups. Due to the 
relationship (\ref{y27}), this leads to $g_m$ algebraically independent genera 
$G_p$. However, the polynomial equations, related $K_p$, $p\ge g_m$, with 
independent coefficients $K_j$, $0\le j<g_m$, read much shorter than their 
countpartners, related $G_p$, $p\ge g_m$, with independent genera $G_j$. It can 
be seen for non-symmetric and symmetric (not CI) semigroups, (see section 
\ref{l52}), but, in particular, for symmetric CI semigroups comparing 
relations (\ref{y65}), (\ref{y67}) and (\ref{y69}) with (\ref{y66}), 
(\ref{y68}) and 
(\ref{y70}). These observations make us to suppose that $K_p$ has 
deeper algebraic meaning than a simple combination of $G_p$.
\appendix
\renewcommand{\theequation}{\thesection\arabic{equation}}
\section{Appendix: Stirling numbers of the 1st kind}\label{appendix1}
\setcounter{equation}{0}
Making use of recurrence equation (\ref{y12}), we calculate the first formulas 
for $\left[n\atop n-k\right]$ up to $k=9$.
\bea
&&\left[n\atop n\right]=1,\;\;\left[n\atop n-1\right]={n\choose 2},\;\;
\left[n\atop n-2\right]={n\choose 3}\left(\frac{3n}{4}-\frac1{4}\right),\;\;
\left[n\atop n-3\right]={n\choose 4}{n\choose 2},\nonumber\\
&&\left[n\atop n-4\right]={n\choose 5}\left(\frac{5n^3}{16}-\frac{5n^2}{8}+
\frac{5n}{48}+\frac1{24}\right),\nonumber\\
&&\left[n\atop n-5\right]={n\choose 6}\left(\frac{3n^3}{16}-\frac{5n^2}{8}+
\frac{5n}{16}+\frac1{8}\right)n,\label{a1}\\
&&\left[n\atop n-6\right]={n\choose 7}\left(\frac{7n^5}{64}-\frac{35n^4}{64}+
\frac{35n^3}{64}+\frac{91n^2}{576}-\frac{7n}{96}-\frac1{36}\right),\nonumber\\
&&\left[n\atop n-7\right]={n\choose 8}\left(\frac{n^5}{16}-\frac{7n^4}{16}+
\frac{35n^3}{48}+\frac{7n^2}{144}-\frac{7n}{24}-\frac1{9}\right)n.\nonumber\\
&&\left[n\atop n-8\right]={n\choose 9}\left(\frac{9n^7}{256}-\frac{21n^6}{64}+
\frac{105n^5}{128}-\frac{7n^4}{32}-\frac{469n^3}{768}-\frac{9n^2}{64}+
\frac{101n}{960}+\frac{3}{80}\right)\nonumber\\
&&\left[n\atop n-9\right]={n\choose 10}\left(\frac{5n^7}{256}-\frac{15n^6}{64}+
\frac{105n^5}{128}-\frac{7n^4}{12}-\frac{665n^3}{768}+\frac{25n^2}{192}+
\frac{101n}{192}+\frac{3}{16}\right)n.\nonumber
\eea
\vspace{-.5cm}
\renewcommand{\theequation}{\thesection\arabic{equation}}
\section{Appendix: Derivatives $\Psi^{(r)}_{z=1}$}\label{appendix2}
\setcounter{equation}{0}
Find expressions for ratio of derivatives $\Psi^{(r)}_{z=1}/\Psi^{(0)}_{z=1}$, 
$r\le 4$,
\bea
\frac{\Psi^{(1)}_{z=1}}{\Psi^{(0)}_{z=1}}&=&\sum_{j=1}^m\frac{\Psi^{(1)}_{j,z=1}
}{\Psi_{j,z=1}},\hspace{1cm}\frac{\Psi^{(2)}_{z=1}}{\Psi^{(0)}_{z=1}}\;=\;\left(
\sum_{j=1}^m\frac{\Psi^{(1)}_{j,z=1}}{\Psi_{j,z=1}}\right)^2+\sum_{j=1}^m\frac{
\Psi^{(2)}_{j,z=1}}{\Psi_{j,z=1}}-\sum_{j=1}^m\left(\frac{\Psi^{(1)}_{j,z=1}}
{\Psi_{j,z=1}}\right)^2,\nonumber\\
\frac{\Psi^{(3)}_{z=1}}{\Psi^{(0)}_{z=1}}&=&\left(\sum_{j=1}^m\frac{\Psi^{(1)}_
{j,z=1}}{\Psi_{j,z=1}}\right)^3+\sum_{j=1}^m\frac{\Psi^{(3)}_{j,z=1}}{\Psi_{
j,z=1}}+2\sum_{j=1}^m\left(\frac{\Psi^{(1)}_{j,z=1})}{\Psi_{j,z=1}}\right)^3+
3\sum_{j=1}^m\frac{\Psi^{(1)}_{j,z=1}}{\Psi_{j,z=1}}\sum_{j=1}^m\frac{\Psi^{(2)}
_{j,z=1}}{\Psi_{j,z=1}}\nonumber\\
&-&3\sum_{j=1}^m\frac{\Psi^{(1)}_{j,z=1}}{\Psi_{j,z=1}}\sum_{j=1}^m\left(\frac{
\Psi^{(1)}_{j,z=1}}{\Psi_{j,z=1}}\right)^2-3\sum_{j=1}^m\frac{\Psi^{(1)}_{j,z=1}
\Psi^{(2)}_{j,z=1}}{\left(\Psi_{j,z=1}\right)^2},\label{b1}\\
\frac{\Psi^{(4)}_{z=1}}{\Psi^{(0)}_{z=1}}&=&\left(\sum_{j=1}^m\frac{\Psi^{(1)}_
{j,z=1}}{\Psi_{j,z=1}}\right)^4+\sum_{j=1}^m\frac{\Psi^{(4)}_{j,z=1}}{\Psi_{j,z
=1}}-6\sum_{j=1}^m\left(\frac{\Psi^{(1)}_{j,z=1}}{\Psi_{j,z=1}}\right)^4+
3\left(\sum_{j=1}^m\left(\frac{\Psi^{(1)}_{j,z=1}}{\Psi_{j,z=1}}\right)^2\right)
^2\nonumber\\
&-&6\sum_{j=1}^m\left(\frac{\Psi^{(1)}_{j,z=1}}{\Psi_{j,z=1}}\right)^2\left(
\sum_{j=1}^m\frac{\Psi^{(1)}_{j,z=1}}{\Psi_{j,z=1}}\right)^2+8\sum_{j=1}^m
\frac{\Psi^{(1)}_{j,z=1}}{\Psi_{j,z=1}}\sum_{j=1}^m\left(\frac{\Psi^{(1)}_{j,z=
1}}{\Psi_{j,z=1}}\right)^3\nonumber\\
&+&6\left(\sum_{j=1}^m\frac{\Psi^{(1)}_{j,z=1}}{\Psi_{j,z=1}}\right)^2\sum_{j=1}
^m\frac{\Psi^{(2)}_{j,z=1}}{\Psi_{j,z=1}}+12\sum_{j=1}^m\left(\frac{\Psi^{(1)}
_{j,z=1}}{\Psi_{j,z=1}}\right)^2\frac{\Psi^{(2)}_{j,z=1}}{\Psi_{j,z=1}}+3
\left(\sum_{j=1}^m\frac{\Psi^{(2)}_{j,z=1}}{\Psi_{j,z=1}}\right)^2\nonumber\\
&-&6\sum_{j=1}^m\left(\frac{\Psi^{(1)}_{j,z=1}}{\Psi_{j,z=1}}\right)^2
\sum_{j=1}^m\frac{\Psi^{(2)}_{j,z=1}}{\Psi_{j,z=1}}-3\sum_{j=1}^m\left(\frac{
\Psi^{(2)}_{j,z=1}}{\Psi_{j,z=1}}\right)^2-4\sum_{j=1}^m\frac{\Psi^{(1)}_{j,z=1}
\Psi^{(3)}_{j,z=1}}{\left(\Psi_{j,z=1}\right)^2}\nonumber\\
&-&12\sum_{j=1}^m\frac{\Psi^{(1)}_{j,z=1}}{\Psi_{j,z=1}}\sum_{j=1}^m\frac{
\Psi^{(1)}_{j,z=1}\Psi^{(2)}_{j,z=1}}{\left(\Psi_{j,z=1}\right)^2}+4\sum_{j=1}^m
\frac{\Psi^{(1)}_{j,z=1}}{\Psi_{j,z=1}}\sum_{j=1}^m\frac{\Psi^{(3)}_{j,z=1}}
{\Psi_{j,z=1}}.\nonumber
\eea
Using a summation rule in a finite calculus (see formula (2.50) in \cite{gr94}),
we find a ratio $\Psi^{(r)}_{j,z=1}/\Psi_{j,z=1}$,
\bea
\sum_{l=0}^{d-1}(l)_r=\frac{(d)_{r+1}}{r+1}\qquad\longrightarrow\qquad
\frac{\Psi^{(r)}_{j,z=1}}{\Psi_{j,z=1}}=\frac{(d_j-1)_r}{r+1}.\label{b2}
\eea
Substituting (\ref{b2}) into (\ref{b1}) we arrive at formulas (\ref{y17}).
\vspace{-.2cm}
\renewcommand{\theequation}{\thesection\arabic{equation}}
\section{Appendix: Symmetric polynomials $T_k(X_1,\ldots,X_m)$}\label{appendix3}
\setcounter{equation}{0}
We present formulas for the first symmetric polynomials $T_k(X_1,\ldots,X_m)$ 
up to $k=7$.
\bea
&&T_0=1,\label{c1}\\
&&T_1=\frac1{2}X_1,\nonumber\\
&&T_2=\frac1{3}\frac{3X_1^2+X_2}{4},\nonumber\\
&&T_3=\frac1{4}\frac{X_1^2+X_2}{2}\;X_1,\nonumber\\
&&T_4=\frac1{5}\frac{15X_1^4+30X_1^2X_2+5X_2^2-2X_4}{48},\nonumber\\
&&T_5=\frac1{6}\frac{3X_1^4+10X_1^2X_2+5X_2^2-2X_4}{16}\;X_1,\nonumber\\
&&T_6=\frac1{7}\frac{63X_1^6+315X_1^4X_2+315X_1^2X_2^2-126X_1^2X_4+35X_2^3-
42X_2X_4+16X_6}{576},\nonumber\\
&&T_7=\frac1{8}\frac{9X_1^6+63X_1^4X_2+105X_1^2X_2^2-42X_1^2X_4+35X_2^3-42X_2X_4
+16X_6}{144}\;X_1.\nonumber
\eea
\vspace{-1cm}

\end{document}